\documentclass[11pt]{amsart}
\usepackage{latexsym}
\usepackage{amssymb}
\usepackage{amsmath}

\usepackage[pdftex,colorlinks]{hyperref}

\usepackage{amsfonts}

\newtheorem{theorem}{Theorem}[section]
\newtheorem{lemma}[theorem]{Lemma}
\newtheorem{proposition}[theorem]{Proposition}

\newtheorem{definition}[theorem]{Definition}
\newtheorem{corollary}[theorem]{Corollary}
\newtheorem{remark}[theorem]{Remark}

\newcommand\sspp{\mathop{\rm span}}
\newcommand\vn{\mathop{\rm VN}}

\newcommand\Bim{\mathop{\rm Bim}}
\newcommand\Sat{\mathop{\rm Sat}}

\newcommand\nul{\mathop{\rm null}}

\newcommand{\cl}[1]{\mathcal{#1}}
\newcommand{\bb}[1]{\mathbb{#1}}
\newcommand{\du}[2]{\left\langle{#1},{#2} \right\rangle} 
\newcommand{\nor}[1]{\left\Vert #1\right\Vert}    
\newcommand{\sca}[1]{\left(#1\right)} %

\newcommand\an{^{-1}}

\newcommand\cb{\mathop{\rm cb}}

\def\gl{\lambda}

\def\gs{\sigma}

\def\Gs{\Sigma}
\def\go{\omega}
\def\Go{\Omega}

\newcommand\suppG{\mathop{\rm supp_{\it G}}}
\newcommand\suppo{\mathop{\rm supp_{\omega}}}

\begin{document}

\title[Ideals, supports and harmonic operators]
{Ideals of the Fourier algebra, supports and harmonic operators}

\date{}

\author{M. Anoussis, A. Katavolos and  I. G. Todorov}

\address{Department of Mathematics, University of the Aegean,
Samos 83 200, Greece}

\email{mano@aegean.gr}

\address{Department of Mathematics, University of Athens,
Athens 157 84, Greece}

\email{akatavol@math.uoa.gr}

\address{Pure Mathematics Research Centre, Queen's University Belfast,
Belfast BT7 1NN, United Kingdom}

\email{i.todorov@qub.ac.uk}

\keywords{Fourier algebra, masa-bimodule, invariant subspaces, harmonic operators}

\begin{abstract}
We examine the common null spaces of families of Herz-Schur multipliers and 
apply our results to study jointly harmonic operators and their relation with 
jointly harmonic functionals. We show how an annihilation formula obtained in 
\cite{akt} can be used to give a short proof {as well as a generalisation} 
of a result of Neufang and Runde  
concerning harmonic operators with respect to a normalised positive definite function. 
We compare the two notions of support of an operator that have been 
studied in the literature and show how one can be expressed in terms of the other. 
\end{abstract}

\maketitle

\section{Introduction and Preliminaries}

In this paper we investigate, for a locally compact group $G$, 
the common null spaces of families of Herz-Schur multipliers 
(or completely bounded multipliers of the Fourier algebra $A(G)$) 
and their relation to ideals of $A(G)$. 

This provides a new perspective for our previous results in \cite{akt}
concerning (weak* closed) spaces of operators on $L^2(G)$ which are simultaneously
invariant under all Schur multipliers and under 
{conjugation by the right regular  representation}
of $G$ on $L^2(G)$ ({\em jointly invariant} subspaces -- see below 
for precise definitions).

At the same time, it provides a new approach to, as well as an extension 
of, a result of  Neufang and Runde \cite{neurun} concerning the space
 $\widetilde{\cl H}_\gs$ 
of operators which are `harmonic' with respect  to 
a positive definite normalised function $\gs:G\to\bb C$. The notion of $\gs$-harmonic
operators was 
introduced in \cite{neurun} as an extension of the notion of $\gs$-harmonic 
functionals on $A(G)$ as defined and studied by Chu and Lau in \cite{chulau}. 
One of the main results of  Neufang and Runde is that $\widetilde{\cl H}_\gs$ 
is the von Neumann algebra
on $L^2(G)$ generated 
by the algebra $\cl D$ of multiplication operators together 
with the space ${\cl H}_\gs$ of harmonic functionals, 
considered as a subspace of the von Neumann algebra $\vn(G)$ of the group.

It will be seen that this result can be obtained as a consequence of the fact 
(see Corollary \ref{c_jho})
that, for any family $\Gs$ of completely bounded multipliers of $A(G)$,     
the space $\widetilde{\cl H}_\Gs$ of {\em jointly $\Gs$-harmonic operators} 
can be obtained as the weak* closed $\cl D$-bimodule 
generated by the {\em jointly $\Gs$-harmonic functionals} ${\cl H}_\Gs$.
In fact, the spaces  $\widetilde{\cl H}_\Gs$ belong to the class 
of jointly invariant subspaces of $\cl B(L^2(G))$ studied 
in \cite[Section 4]{akt}.

The space ${\cl H}_\Gs$ is the annihilator in $\vn(G)$ of a certain ideal of $A(G)$.  
Now from any given closed ideal $J$  of the Fourier algebra $A(G)$, 
there are two `canonical' ways to arrive at 
a weak* closed $\cl D$-bimodule of 
$\cl B(L^2(G))$. One way is to consider its annihilator $J^\perp$ 
in $\vn(G)$ and then take the weak* closed 
$\cl D$-bimodule
generated by $J^{\perp}$. We call this bimodule $\Bim(J^\perp)$. The other way is to
take a suitable saturation $\Sat(J)$ of $J$ 
within the trace class operators 
on $L^2(G)$ (see Theorem \ref{th_satlcg}), and then form its annihilator. This gives 
a masa bimodule $(\Sat J)^{\perp}$ in $\cl B(L^2(G))$. In \cite{akt},
we proved that these
 two procedures  yield the same bimodule, that is, $\Bim(J^\perp) = (\Sat J)^{\perp}$. 
Our proof  that $\widetilde{\cl H}_\Gs=\Bim({\cl H}_\Gs)$ 
rests on this equality. 

The notion of {\em support}, $\suppG(T)$,  of an element $T\in\vn(G)$ was 
introduced by Eymard in \cite{eymard} by considering $T$ as a linear functional
on the function algebra $A(G)$; thus  $\suppG(T)$ is a closed subset of $G$. 
This notion was extended by Neufang and Runde in \cite{neurun} to an arbitrary
 $T\in\cl B(L^2(G))$ 
and used to describe harmonic operators. By considering joint supports,
we show that this extended notion of $G$-support for an operator $T\in\cl B(L^2(G))$
coincides
with the joint $G$-support of a family of elements of $\vn (G)$ naturally associated    
to $T$ (Proposition \ref{propsame2}).

On the other hand, the notion of support of an operator $T$ acting on $L^2(G)$ 
was first introduced by  Arveson in \cite{arv}
as a certain closed subset of $G \times G$.
This notion was used 
in his study of what was later called operator synthesis. 
A different but related approach  appears in \cite{eks}, 
where the notion of 
$\omega$-support, $\suppo(T)$, of $T$  was introduced
and used to establish a bijective correspondence between 
reflexive masa-bimodules and $\go$-closed subsets of $G\times G$. 

We show that the joint $G$-support $\suppG(\cl A)$ of an arbitrary family  
 $\cl A\subseteq \cl B(L^2(G))$ can be fully described in terms of its
joint $\go$-support $\suppo(\cl A)$ (Theorem \ref{th_compsa}). 
The converse does not hold in general, 
as the $\go$-support, being a subset of $G\times G$, contains in general more
 information about an arbitrary operator than its $G$-support 
(see Remark \ref{last});
however, in case $\cl A$ is a (weak* closed) jointly invariant subspace,   
we show that its $\go$-support can be recovered from its $G$-support
(Theorem \ref{312}).
We also show that, if a set  $\Omega\subseteq G\times G$ is invariant 
under all maps $(s,t)\to (sr,tr), \, r\in G$, 
then $\Omega$ is marginally equivalent to an $\omega$-closed set if and only if 
it is marginally equivalent to a (topologically) closed set. 
This can fail for non-invariant sets (see for example  \cite[p. 561]{eks}). 
{For a related result, see  \cite[Proposition 7.3]{stt_clos}.}

\medskip

\noindent\textbf{Preliminaries and Notation }
Throughout, $G$ will denote a second countable locally compact group,
equipped with left Haar measure.
Denote by  $\cl D\subseteq\cl{B}(L^2(G))$ the maximal abelian selfadjoint algebra
(masa, for short) consisting of all  multiplication operators $M_f:g\to fg$, where 
$f\in L^\infty(G)$.
We write $\vn (G)$ for the von Neumann algebra $\{\gl_s : s\in G\}''$ generated by 
the left regular representation  $s\to \lambda_s$  of $G$ on  $L^2(G)$
(here $(\gl_sg)(t)=g(s\an t)$).

Every element of the predual of $\vn (G)$ is a vector functional, 
$\omega_{\xi,\eta}: T\to (T\xi,\eta)$,
where $\xi,\eta\in L^2(G)$, and $\nor{\go_{\xi,\eta}}$ is  the infimum of the products 
$\|\xi\|_2\|\eta\|_2$ over all such representations. This 
predual can be identified \cite{eymard} with the set $A(G)$ of all 
complex functions $u$ on $G$ of the form $s\to u(s)=\omega_{\xi,\eta}(\gl_s)$.
With the above norm and pointwise operations, $A(G)$  
is a (commutative, regular, semi-simple) Banach algebra of continuous functions 
on $G$ vanishing at infinity,
called the \emph{Fourier algebra} of $G$; its Gelfand spectrum can 
be identified with $G$ {\it via} point evaluations. 
The set $A_c(G)$ of compactly supported 
elements of $A(G)$  is dense in $A(G)$.

A function $\gs:G\to\bb C$ is a {\em multiplier} of $A(G)$ if for all $u\in A(G)$
the pointwise product $\gs u$ is again in $A(G)$. By duality, a multiplier $\gs$
induces a bounded operator $T\to \gs\cdot T$ on $\vn(G)$. We say $\gs$
is {\em a completely bounded (or Herz-Schur) multiplier}, 
and write $\gs\in M^{\cb}A(G)$,
if the latter operator is completely bounded, that is, if there exists a constant $K$ 
such that $\nor{[\gs\cdot T_{ij}]}\le K \nor{[T_{ij}]}$ for all $n\in\bb N$ and all 
$[T_{ij}]\in M_n(\vn (G))$ (the latter being the space of all $n$ by $n$ matrices with
entries in $\vn (G)$).
The least such constant is the \emph{cb norm} of $\gs$. 
The space  $M^{\cb}A(G)$ with pointwise operations and the cb norm is 
a Banach algebra into which $A(G)$ embeds contractively.  
For a subset $\Sigma\subseteq M^{\cb}A(G)$, we let 
$Z(\Sigma)=\{s\in G: \gs(s)=0 \text{ for all } \gs\in\Gs\}$
be its \emph{zero set}.

A subset $\Go\subseteq G\times G$ is called {\em marginally null} if 
there exists a null set (with respect to Haar measure) $X\subseteq G$  such that 
$\Go\subseteq (X\times G)\cup(G\times X)$. 
Two sets $\Go,\Go'\subseteq G\times G$ are {\em marginally equivalent}
if their symmetric difference is a marginally null set; 
we write $\Omega_1\cong \Omega_2$. 
A set $\Go\subseteq G\times G$ is said to be {\em $\go$-open} if 
it is marginally equivalent to a {\em countable} union of Borel rectangles $A\times B$;
it is called   {\em $\go$-closed} when its complement is $\go$-open.

Given any set  $\Go\subseteq G\times G$, we denote by  
$\frak M_{\max}(\Go)$ the set of all $T\in\cl{B}(L^2(G))$ which are {\em supported}
by $\Go$ in the sense that $M_{\chi_ B}TM_{\chi_A}=0$ whenever 
$A\times B\subseteq G\times G$
is a Borel rectangle disjoint from $\Go$
(we write $\chi_A$ for the characteristic function of a set $A$).
Given any set $\cl U\subseteq \cl{B}(L^2(G))$ there exists a smallest, up to marginal
equivalence, $\go$-closed set  $\Go\subseteq G\times G$ supporting every element
of $\cl U$, {\it i.e.} such that $\cl U\subseteq\frak M_{\max}(\Go)$. This set is 
called {\em the  $\go$-support} of $\cl U$ and is denoted $\suppo(\cl U)$ \cite{eks}.

Two functions $h_1,h_2 : G\times G\to \bb{C}$ are said to be 
{\em marginally equivalent}, or equal
{\em marginally almost everywhere (m.a.e.)},  if 
they differ on a marginally  null set.

The predual of  $\cl{B}(L^2(G))$ consists of all linear forms $\go$ given by 	
$\go(T):= \sum\limits_{i=1}^{\infty} \sca{Tf_i, g_i}$
where $f_i, g_i\in L^2(G)$ and $\sum\limits_{i=1}^{\infty}\nor{f_i}_2\nor{g_i}_2<\infty$.
Each such $\go$ defines a trace class operator whose kernel is a function
$h = h_\go:G\times G\to\bb C$,  unique up to marginal equivalence,  given by
$h(x,y)=\sum\limits_{i=1}^{\infty} f_i(x )\bar g_i(y)$. 
This series converges marginally almost everywhere on $G\times G$.
We use the notation $\du{T}{h} :=\go(T)$.

We write $T(G)$ for the Banach space of (marginal equivalence classes of) such 
functions, equipped  with the norm of the predual of  $\cl{B}(L^2(G))$.  

Let $\frak{S}(G)$ be the multiplier algebra of $T(G)$; by definition, a
measurable function $w : G\times G\rightarrow \bb{C}$ belongs to $\frak{S}(G)$ if
the map $m_w: h\to wh$ leaves $T(G)$ invariant, that is, if
$wh$ is marginally equivalent to a function from $T(G)$, for every $h\in T(G)$. 
Note that the operator $m_w$ is automatically bounded. 
The elements of $\frak{S}(G)$ are called \emph{(measurable) Schur multipliers}.
By duality, every Schur multiplier induces a bounded operator 
$S_w$ on $\cl B(L^2(G))$, given by 
\[\du{S_w(T)}{h} = \du{T}{wh}, \ \ \ h\in T(G), \; T\in \cl B(L^2(G))\, .\]
The operators of the form $S_w$, $w\in \frak{S}(G)$, are precisely the 
bounded weak* continuous $\cl D$-bimodule maps on $\cl B(L^2(G))$ 
(see \cite{haa}, \cite{sm}, \cite{pe} and \cite{kp}).

A weak* closed subspace $\cl U$ of $\cl B(L^2(G))$ is invariant under 
the maps $S_w$, $w\in \frak{S}(G)$, if and only if it is invariant under 
all left and right multiplications by elements of $\cl D$,
{\it i.e.} if $M_fTM_g\in \cl U$ for all $f,g\in L^\infty(G)$ and all $T\in\cl U$, in 
other words, if it is a {\em $\cl D$-bimodule}.   
For any set $\cl T\subseteq \cl B(L^2(G))$
we denote by $\Bim\cl T$ the smallest weak* closed $\cl D$-bimodule containing 
$\cl T$; 
thus, $\mathrm{Bim}(\cl T)=\overline{[\mathfrak{S}(G)\cl T]}^{w^*}$.

We call a subspace $\cl U\subseteq \cl B(L^2(G))$ {\em invariant} 
if $\rho_rT\rho_r^*\in\cl A$ for all $T\in\cl A$ and all $r\in G$; here, $r\to \rho_r$ is 
the right  regular representation of $G$ on $L^2(G)$. 
An invariant space, which is also a $\cl D$-bimodule, will be called a
{\em jointly invariant space}.
  
It is not hard to see that, if $\cl A\subseteq \cl B(L^2(G))$, 
the smallest weak* closed jointly invariant space 
 containing $\cl A$ is   the weak* closed linear span of 
$\{S_w(\rho_rT\rho_r^*): T\in\cl A, w\in \frak S(G), r\in G\}$.

For a complex function $u$ on $G$ we let $N(u):G\times G\to\bb C$ be
the function given by $N(u)(s,t) = u(ts^{-1})$. For any subset $E$ of $G$, we write 
$E^*=\{(s,t)\in G\times G: ts^{-1}\in E\}$. 

It is shown in \cite{bf} (see also \cite{j} and \cite{spronk}) 
that the map $u\rightarrow N(u)$ is an isometry 
from $M^{\cb}A(G)$ into $ \frak{S}(G)$ and that its range consists precisely of all
{\em invariant} Schur multipliers, {\it i.e.} those 
$w\in \frak{S}(G)$ for which $w(sr,tr) = w(s,t)$ for every $r\in G$ and
marginally almost all $s,t$. Note that the corresponding operators $S_{N(u)}$ 
are denoted $\hat\Theta(u)$ in \cite{neuruaspro}.

The following result from \cite{akt} is crucial for what follows.

\begin{theorem}\label{th_satlcg}
Let $J\subseteq A(G)$ be a closed ideal and
$\Sat(J)$ be the closed $L^\infty(G)$-bimodule of $T(G)$ generated by  the set
\[
\{N(u)\chi_{L\times L}: u \in J, L\  \text{compact, } \ L\subseteq G \}.
\]
Then
$\Sat(J)^{\perp} = \Bim (J^{\perp})$.
\end{theorem}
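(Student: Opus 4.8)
The plan is to prove the asserted equality of two weak* closed $\cl D$-bimodules by passing to preannihilators in $T(G)$ and showing that these coincide. The central tool is the map $P\colon T(G)\to A(G)$ defined as the preadjoint of the (normal) inclusion $\vn(G)\hookrightarrow\cl B(L^2(G))$; concretely,
\[
P(h)(s)=\int_G h(t,st)\,dt,\qquad \du{T}{P(h)}=\du{T}{h}\ \text{ for all } T\in\vn(G),\ h\in T(G),
\]
where the left pairing is that of $A(G)$ with $\vn(G)$ and the right one that of $T(G)$ with $\cl B(L^2(G))$. Two facts drive everything. First, $P$ is bounded into $A(G)$. Second, since $N(u)(t,st)=u(s)$ is constant along the fibres over which $P$ integrates, for any Schur multiplier $w$ and any $a,b\in L^\infty(G)$ one computes $P\big((a\otimes b)\,N(u)\chi_{L\times L}\big)=u\cdot P\big((a\otimes b)\chi_{L\times L}\big)$; as $L$ is compact, $(a\otimes b)\chi_{L\times L}\in T(G)$, so $P\big((a\otimes b)\chi_{L\times L}\big)\in A(G)$, whence the product lies in $J$ because $u\in J$ and $J$ is an ideal.

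For the inclusion $\Bim(J^\perp)\subseteq\Sat(J)^\perp$ I would first note that $\Sat(J)^\perp$ is a weak* closed $\cl D$-bimodule: this follows from the duality relating $M_fTM_g$ to multiplication of kernels by an elementary Schur multiplier, together with the fact that $\Sat(J)$ is an $L^\infty(G)$-bimodule. Since $\Bim(J^\perp)$ is the \emph{smallest} weak* closed $\cl D$-bimodule containing $J^\perp$, it then suffices to show $J^\perp\subseteq\Sat(J)^\perp$. For $T\in J^\perp\subseteq\vn(G)$ and $h$ in the $L^\infty$-bimodule generated by the $N(u)\chi_{L\times L}$, the identity $\du{T}{h}=\du{T}{P(h)}$ and the computation above give $P(h)\in J$, hence $\du{T}{h}=0$; norm-continuity of $P$ and of the pairing extend this to all of $\Sat(J)$.

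The reverse inclusion is the substance of the theorem, and I would recast it through preannihilators. Using $\du{S_w(T)}{h}=\du{T}{wh}$, the relation $\du{T}{wh}=\du{T}{P(wh)}$ (valid for $T\in\vn(G)$), and $(J^{\perp})_\perp=J$ in $A(G)$, one identifies the preannihilator of $\Bim(J^\perp)$ as $R:=\{h\in T(G): P(wh)\in J \text{ for every } w\in\frak S(G)\}$. Since $\Sat(J)=(\Sat(J)^\perp)_\perp$ is closed, proving $R=\Sat(J)$ yields $\Bim(J^\perp)=\Sat(J)^\perp$ by taking annihilators. The easy inclusion already gives $\Sat(J)\subseteq R$, so the theorem reduces to $R\subseteq\Sat(J)$. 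Here I would first reduce to compactly supported $h$: for $h\in R$ the truncation $\chi_L\cdot h\cdot\chi_L$ still lies in $R$ (as $w\chi_{L\times L}\in\frak S(G)$), and $\chi_L\cdot h\cdot\chi_L\to h$ in $T(G)$-norm as $L\nearrow G$, while $\Sat(J)$ is closed. After the change of variables $(s,t)\mapsto(s,ts^{-1})$ — which turns $P$ into integration over the first variable and turns $N(u)$ into the function $(s,r)\mapsto u(r)$ — the hypothesis ``$P(wh)\in J$ for all $w$'' says precisely that every averaged fibre of $h$ is controlled by $J$, and the goal becomes to approximate $h$ in $T(G)$-norm by $L^\infty$-bimodule combinations of the generators $N(u)\chi_{L\times L}$.

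I expect this final approximation to be the main obstacle. In the twisted coordinates the $L^\infty$-bimodule action is multiplication by $a(s)b(rs)$, so it mixes the transverse variable $s$ and the fibre variable $r$ and one cannot simply separate the $r$-behaviour from the $s$-behaviour. The honest route is a synthesis-type argument: discretize the transverse variable, use the hypothesis $P(wh)\in J$ for suitable rank-one $w=f\otimes g$ to place each resulting fibre datum inside $J$, and reassemble the pieces while controlling the $T(G)$-norm of the error. This is exactly the point where the passage from spectral synthesis in $A(G)$ to operator synthesis in $\cl B(L^2(G))$ genuinely enters, and where measurable-selection and approximate-identity techniques will be required.
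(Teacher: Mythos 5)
Your reductions are sound as far as they go. The map $P$ is indeed the preadjoint of the inclusion $\vn(G)\hookrightarrow\cl B(L^2(G))$, and with the paper's conventions $P(h)(s)=\int_G h(t,st)\,dt$ satisfies $\du{T}{h}=\du{T}{P(h)}$ for $T\in\vn(G)$; your identity $P\bigl((a\otimes b)N(u)\chi_{L\times L}\bigr)=u\cdot P\bigl((a\otimes b)\chi_{L\times L}\bigr)$ is correct; the inclusion $\Bim(J^\perp)\subseteq\Sat(J)^\perp$ is completely proved; and the bipolar argument correctly identifies the preannihilator of $\Bim(J^\perp)$ as $R=\{h\in T(G): P(wh)\in J\ \text{for all}\ w\in\frak S(G)\}$, so the theorem is indeed equivalent to $R\subseteq\Sat(J)$, with the truncation to compactly supported $h$ also valid. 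But at that point you stop, and what is omitted is not a final technical verification --- it is the theorem. Everything you establish is soft duality bookkeeping; the inclusion $R\subseteq\Sat(J)$ (equivalently $\Sat(J)^\perp\subseteq\Bim(J^\perp)$) is the hard direction, and your closing paragraph (``discretize the transverse variable \dots\ measurable-selection and approximate-identity techniques will be required'') describes a hope, not an argument. Note also that the present paper does not prove this statement at all: it imports it from \cite{akt}, where precisely this direction is the technical core of that paper. Any proof must reconstruct $h$, in the projective norm of $T(G)$, from the knowledge that all its transverse averages $P(wh)$ lie in $J$; this is where the difficulty sits, because (as you yourself observe) in twisted coordinates the bimodule action $a(s)b(rs)$ mixes the fibre and transverse variables, and the projective norm gives no control over the error of a naive discretize-and-reassemble scheme. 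Even the trivial case $J=\{0\}$ --- if $P(wh)=0$ for every $w\in\frak S(G)$ then $h=0$ in $T(G)$ --- already requires an argument (a Fubini-type computation together with the fact that an element of $T(G)$ vanishing almost everywhere vanishes marginally almost everywhere), and you have not supplied even that.

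One conceptual correction: you present the missing step as ``exactly the point where the passage from spectral synthesis in $A(G)$ to operator synthesis genuinely enters.'' It is rather the opposite. The theorem is interesting precisely because it holds for \emph{every} closed ideal $J$, with no synthesis hypothesis; the saturation $\Sat(J)$ is designed exactly so that the correspondence $J\mapsto \Bim(J^\perp)=\Sat(J)^\perp$ bypasses synthesis, which enters only later (e.g.\ in Corollary \ref{c_nss}, where one wants to replace $\Sat(J)^\perp$ by $\frak M_{\max}(Z(J)^*)$, or in Lemma \ref{proto}). So a ``synthesis-type argument'' is not what is called for; what is needed is an unconditional approximation scheme valid for arbitrary closed ideals, and none of the substance of that scheme --- which occupies the heart of \cite{akt} --- appears in your proposal.
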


\section{Null spaces and harmonic operators}\label{s1}

Given a subset  $\Gs\subseteq M^{\cb}A(G)$, let
 \[ 
\frak{N}(\Sigma) = \{T\in \vn(G) : \gs\cdot T = 0, \ \mbox{ for all } \gs\in \Gs\}
\]
be the {\em common null set} of the operators on $\vn(G)$ of the form 
$T\to \sigma\cdot T$, with $\sigma\in \Sigma$. 
Letting
\[\Gs A \stackrel{def}{=}  
\overline{\sspp}(\Gs A(G)) = \overline{\sspp\{ \gs u : \gs\in \Sigma, u\in A(G)\}},\]
it is easy to verify that $\Gs A$ is a closed ideal of $A(G)$ and that 
\begin{equation}\label{eq_prean}
\frak{N}(\Sigma) = (\Gs A)^\bot .
\end{equation}

{\remark \label{remideal}
The sets of the form $\Gs A$ are precisely the closed ideals of $A(G)$
generated by their compactly supported elements.} 
\begin{proof}
It is clear that, if $\Gs\subseteq M^{\cb}A(G)$, the set 
$\{\gs u: \gs\in\Gs, u\in A_c(G)\}$ consists of compactly supported elements 
and is dense in  $\Gs A$. Conversely, suppose that 
$J\subseteq A(G)$ is a closed ideal such that $J\cap A_c(G)$ is dense in $J$. 
For every  $u\in J$ with compact support $K$, 
there exists $v\in A(G)$ which equals 1 on $K$ \cite[(3.2) Lemme]{eymard}, 
and so $u=uv\in JA$.
Thus $J= \overline{J\cap A_c(G)}\subseteq JA\subseteq J$ and hence $J=JA$.
\end{proof}

The following Proposition shows that 
it is sufficient to study sets of the form $\frak N(J)$ where $J$ is a closed ideal 
of $A(G)$.

\begin{proposition}\label{p_njan}  For any subset $\Gs$ of $M^{\cb}A(G)$, 
\[  \frak{N}(\Gs)=\frak{N}(\Gs A).\]
\end{proposition}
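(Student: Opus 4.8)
The plan is to express both $\frak N(\Gs)$ and $\frak N(\Gs A)$ as annihilators of closed ideals of $A(G)$ by means of the identity \eqref{eq_prean}, and then to verify that the two ideals coincide. Applying \eqref{eq_prean} to $\Gs$ itself gives $\frak N(\Gs) = (\Gs A)^\perp$. For the other side, I would regard $\Gs A$ as a subset of $M^{\cb}A(G)$ through the contractive embedding $A(G)\hookrightarrow M^{\cb}A(G)$, so that $\frak N(\Gs A)$ is meaningful, and then apply \eqref{eq_prean} once more to the family $\Gs A$. This yields $\frak N(\Gs A) = \big((\Gs A)A\big)^\perp$, where $(\Gs A)A = \overline{\sspp}\big((\Gs A)\,A(G)\big)$ is the closed ideal generated by the products of elements of $\Gs A$ with elements of $A(G)$.

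It therefore remains to establish the ideal identity $(\Gs A)A = \Gs A$; once this is known, the two annihilators agree and the proposition follows. The inclusion $(\Gs A)A \subseteq \Gs A$ is immediate, since $\Gs A$ is already an ideal of $A(G)$ and hence is closed under multiplication by $A(G)$. For the reverse inclusion I would invoke Remark \ref{remideal}: the ideal $\Gs A$ is generated by its compactly supported elements, so $(\Gs A)\cap A_c(G)$ is dense in $\Gs A$. Given $u\in(\Gs A)\cap A_c(G)$ with (compact) support $K$, Eymard's lemma \cite[(3.2) Lemme]{eymard} furnishes $v\in A(G)$ with $v\equiv 1$ on $K$, whence $u = uv \in (\Gs A)\,A(G) \subseteq (\Gs A)A$. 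Passing to the closure over all such $u$ yields $\Gs A \subseteq (\Gs A)A$, completing the identity.

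The computation is short, and the only genuine point is the ideal identity $(\Gs A)A = \Gs A$, which is essentially a repetition of the argument already used in the proof of Remark \ref{remideal} (the existence of an $A(G)$-function equal to $1$ on a prescribed compact set, acting as a local unit). I do not anticipate a serious obstacle; the one item meriting care is the compatibility of the two descriptions of the multiplier action, namely that for $u\in A(G)\subseteq M^{\cb}A(G)$ the induced map $T\mapsto u\cdot T$ on $\vn(G)$ is the usual one, so that \eqref{eq_prean} may be applied verbatim to the family $\Gs A$ and produce exactly $\big((\Gs A)A\big)^\perp$.
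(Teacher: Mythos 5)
Your proof is correct, but it takes a different route from the paper's. The paper argues directly with the duality pairing: the inclusion $\frak{N}(\Gs)\subseteq\frak{N}(\Gs A)$ is immediate, and for the converse it fixes $\gs\in\Gs$, observes that $\du{\gs\cdot T}{uv}=0$ for all $u,v\in A(G)$, and invokes the density of the linear span of products $uv$ in $A(G)$. You instead work entirely on the ideal side: applying (\ref{eq_prean}) to both $\Gs$ and $\Gs A$ (the latter viewed inside $M^{\cb}A(G)$, with the compatibility of the two module actions correctly flagged), you reduce the proposition to the purely algebraic identity $(\Gs A)A=\Gs A$, which you obtain from Remark~\ref{remideal} and Eymard's local-unit lemma. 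The two arguments rest on the same underlying fact --- the existence in $A(G)$ of functions equal to $1$ on a prescribed compact set, which is also what makes the products $uv$ dense in the paper's argument --- but your organization makes the proposition a formal consequence of two facts already on record ((\ref{eq_prean}) and Remark~\ref{remideal}), with no new pairing computation, whereas the paper's proof is self-contained and needs only the density of products in $A(G)$, not the fact that $\Gs A$ is generated by its compactly supported elements. Both proofs are complete; yours has the mild advantage of isolating the ideal identity $(\Gs A)A=\Gs A$, which is the real content, while the paper's is shorter to write down from scratch.
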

\proof 
If $\gs\cdot T = 0$ for all $\gs\in\Gs$ then {\em a fortiori} 
$v\gs\cdot T=0$, for all $v\in A(G)$ and all $\sigma\in \Sigma$.
It follows that $w\cdot T=0$ for all $w\in \Gs A$; thus 
$\frak{N}(\Gs)\subseteq \frak{N}(\Gs A)$.

Suppose conversely that $w\cdot T=0$ for all $w\in \Gs A$ and fix $\gs\in\Gs$. 
Now $u\gs\cdot T=0$ for all $u \in A(G)$,
and so $ \du{\gs\cdot T}{uv}=0$ when $u,v\in A(G)$. 
Since the products $uv$ form a dense subset of $A(G)$, we have   
$\gs\cdot T=0$. Thus $\frak{N}(\Gs)\supseteq \frak{N}(\Gs A)$ 
since $\gs\in\Gs$ is arbitrary, and the proof is complete. \qed 

\bigskip

It is not hard to see that $\gl_s$
is in $\frak N(\Gs)$ if and only if $s$ is in the zero set $Z(\Gs)$ of $\Gs$, and so 
$Z(\Gs)$ coincides with the zero set of the ideal $J=\Gs A$.
Whether or not, for an ideal $J$, these unitaries suffice to generate $\frak N(J)$
 depends on properties of the zero set.
  
For our purposes, a closed subset $E\subseteq G$ is a {\em set of synthesis} 
if there is a unique closed ideal $J$ of $A(G)$ with $Z(J)=E$. Note 
that this ideal is generated by its compactly supported elements 
\cite[Theorem 5.1.6]{kaniuth}.

\begin{lemma} \label{proto}
Let $J\subseteq A(G)$ be a closed ideal. Suppose that its zero set $E=Z(J)$ is 
a set of synthesis. Then
\[
\frak N(J)=J^\bot=\overline{\sspp\{\gl_x:x\in E\}}^{w*}
\]
\end{lemma}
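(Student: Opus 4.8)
The plan is to collapse the three a priori distinct ideals $J$, $JA$ and $I(E):=\{u\in A(G):u|_E=0\}$ into a single ideal by invoking the defining uniqueness property of a set of synthesis, and then to read off both equalities from the duality between $A(G)$ and $\vn(G)$.

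First I would establish $\frak N(J)=J^\perp$. Regarding $J$ as a subset of $M^{\cb}A(G)$, formula \eqref{eq_prean} gives $\frak N(J)=(JA)^\perp$, so it suffices to show $JA=J$. Since $J$ is an ideal, $\gs u\in J$ whenever $\gs\in J$ and $u\in A(G)$, whence $JA\subseteq J$; moreover $JA$ is itself a closed ideal and, as noted above, its zero set coincides with $Z(J)=E$. As $E$ is a set of synthesis, there is a unique closed ideal with zero set $E$, and therefore $JA=J$. (Alternatively, one may use that the unique ideal with zero set $E$ is generated by its compactly supported elements, together with Remark \ref{remideal}, to reach the same conclusion.)

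For the second equality, set $\cl W=\overline{\sspp\{\gl_x:x\in E\}}^{w*}$, a weak* closed subspace of $\vn(G)$, and compute its pre-annihilator in $A(G)$. Since $\du{\gl_x}{u}=u(x)$, a function $u\in A(G)$ annihilates every $\gl_x$ with $x\in E$ exactly when $u$ vanishes on $E$; hence $\cl W_\perp=I(E)$. Now $I(E)$ is a closed ideal of $A(G)$ with zero set precisely $E$ (using that $E$ is closed and that $A(G)$ is regular), so the synthesis hypothesis again forces $I(E)=J$. Because $\cl W$ is weak* closed, the bipolar theorem for the pairing $\du{A(G)}{\vn(G)}$ yields $\cl W=(\cl W_\perp)^\perp=I(E)^\perp=J^\perp$.

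Combining the two steps gives $\frak N(J)=J^\perp=\cl W$, as asserted. I expect no serious obstacle here: the only points needing care are the routine verifications that $JA$ and $I(E)$ are closed ideals with zero set $E$ and that the $A(G)$--$\vn(G)$ pairing is oriented correctly, while the whole force of the argument lies in the uniqueness clause in the definition of a set of synthesis, which is exactly what identifies $J$, $JA$ and $I(E)$ with one another.
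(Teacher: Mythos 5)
Your proof is correct and follows essentially the same route as the paper's: the first equality comes from relation (\ref{eq_prean}) together with $JA=J$, and the second from identifying $J$ with the ideal of functions vanishing on $E$ and applying duality (the bipolar theorem). The only cosmetic difference is that you obtain $JA=J$ and $J=I(E)$ directly from the uniqueness clause in the definition of a set of synthesis (both being closed ideals with zero set $E$), whereas the paper routes the first of these through Remark \ref{remideal} and the fact that the synthesis ideal is generated by its compactly supported elements.
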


\proof 
Since $E$ is a set of synthesis,  $J=JA$ by Remark \ref{remideal}; 
thus $J^\bot=(JA)^\bot=\frak N(J)$  by relation (\ref{eq_prean}). 
The other equality is essentially a reformulation of the fact that $E$ 
is a set of synthesis:  a function $u\in A(G)$ is in $J$ if and only if 
it vanishes at every point of $E$, that is, if and only if it annihilates every $\gl_s$
with $s\in E$ (since $\du{\gl_s}{u}= u(s)$).    \qed

\medskip 

A linear space $\cl U$ of bounded operators on a Hilbert space is called  
{\em a ternary ring of operators (TRO)} 
 if it satisfies $ST^*R\in\cl U$ whenever $S,T$ and $R$ are in $\cl U$. Note that a TRO
containing the identity operator is automatically a selfadjoint algebra.

\begin{proposition}\label{deutero}
Let $J\subseteq A(G)$ be a closed ideal. Suppose that its zero set $E=Z(J)$ is 
the  coset of a closed subgroup of $G$. Then
$\frak N(J)$ is a (weak-* closed) TRO. In particular, if $E$ is a closed subgroup
then  $\frak N(J)$ is a von Neumann subalgebra of $\vn (G)$.
\end{proposition}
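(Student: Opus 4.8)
The plan is to reduce the statement to Lemma \ref{proto}, whose hypothesis is exactly what is available here. Indeed, a coset of a closed subgroup of $G$ is a set of synthesis for $A(G)$: closed subgroups are sets of spectral synthesis (Takesaki--Tatsuuma), and this property is preserved under the isometric translation automorphisms of $A(G)$, so every coset $E$ of a closed subgroup is again a set of synthesis. Applying Lemma \ref{proto} to $J$ (which is the unique closed ideal with $Z(J)=E$) gives
\[
\frak{N}(J) = \overline{\sspp\{\gl_x : x\in E\}}^{w*},
\]
and so it suffices to show that this weak-* closed span is a TRO.

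The key algebraic observation is that the generators are already closed under the ternary product $(a,b,c)\mapsto ab^*c$. Since $\gl_x$ is unitary with $\gl_x^*=\gl_{x^{-1}}$ and $\gl_x\gl_y=\gl_{xy}$, one has $\gl_x\gl_y^*\gl_z=\gl_{xy^{-1}z}$, so I would check that $xy^{-1}z\in E$ whenever $x,y,z\in E$. Writing $E=tH$ for a closed subgroup $H$ and $t\in G$, and putting $x=th_1,\,y=th_2,\,z=th_3$ with $h_i\in H$, one computes $xy^{-1}z = t\,h_1h_2^{-1}h_3\in tH=E$; the case of a right coset $E=Ht$ is identical. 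Hence $\gl_x\gl_y^*\gl_z\in\frak{N}(J)$ for all $x,y,z\in E$, and by bilinearity (and conjugate-linearity in the middle variable) $ST^*R\in\frak{N}(J)$ whenever $S,T,R$ lie in the linear span $\cl L=\sspp\{\gl_x:x\in E\}$.

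To pass from $\cl L$ to its weak-* closure $\frak{N}(J)$, I would invoke that multiplication on $\cl B(L^2(G))$ is separately weak-* continuous and that $T\mapsto T^*$ is weak-* continuous. Fixing $T,R\in\cl L$, the set of $S$ with $ST^*R\in\frak{N}(J)$ is weak-* closed (the map $S\mapsto ST^*R$ is weak-* continuous and $\frak{N}(J)$ is weak-* closed) and contains $\cl L$, hence all of $\frak{N}(J)$. Next, fixing $S\in\frak{N}(J)$ and $R\in\cl L$ and letting $T$ vary, then fixing $S,T\in\frak{N}(J)$ and letting $R$ vary, the identical argument each time extends membership to the full space, yielding $ST^*R\in\frak{N}(J)$ for all $S,T,R\in\frak{N}(J)$. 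This gives the TRO property. For the final assertion, if $E=H$ is a closed subgroup then $e\in E$ and $\gl_e=I\in\frak{N}(J)$; a TRO containing the identity is a selfadjoint algebra (for $T\in\frak{N}(J)$, $T^*=IT^*I\in\frak{N}(J)$, and for $S,T\in\frak{N}(J)$, $ST=SI^*T\in\frak{N}(J)$), so $\frak{N}(J)$ is a unital, selfadjoint, weak-* closed subalgebra of $\vn(G)$, i.e.\ a von Neumann subalgebra.

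The hard part is the synthesis step: the entire argument hinges on the identification of $\frak{N}(J)$ with the weak-* closed span of $\{\gl_x:x\in E\}$, which through Lemma \ref{proto} requires that cosets of closed subgroups are sets of synthesis. Once this nontrivial input is secured, the rest is the routine coset computation $xy^{-1}z\in E$ together with a standard separate-weak-*-continuity argument.
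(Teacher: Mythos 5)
Your proof is correct and follows essentially the same route as the paper's: cosets of closed subgroups are sets of synthesis (Takesaki--Tatsuuma plus translation invariance), Lemma \ref{proto} identifies $\frak N(J)$ with the weak-* closed span of $\{\gl_x : x\in E\}$, and the ternary relation $\gl_x\gl_y^*\gl_z=\gl_{xy^{-1}z}$ is verified on these generators using the coset structure. The only difference is that you spell out the passage from the generators to the weak-* closure via separate weak-* continuity of multiplication and of the adjoint, a routine step the paper leaves implicit.
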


\proof We may write $E=Hg$ where $H$ is a closed subgroup and $g\in G$
(the proof for the case  $E=gH$ is identical).
Now $E$ is a translate of $H$ which is a set of synthesis by \cite{tatsuuma2}
and hence $E$ is a set of synthesis. 
Thus Lemma \ref{proto} applies.

If $sg,tg,rg$ are in $E$ and $S=\gl_{sg},  \, T=\gl_{tg}$  and $R=\gl_{rg}$, then
$ST^*R=\gl_{st\an rg}$ is also in $\frak N(J)$ because $st\an rg\in E$.
Since $\frak N(J)$ is generated by $\{\gl_x:x\in E\}$, it follows 
that $ST^*R\in\frak N(J)$ for any 
three elements $S,T,R$ of $\frak N(J)$.  \qed

\medskip
{\remark Special cases of the above result are proved by Chu and Lau
in \cite{chulau} (see Propositions 3.2.10 and 3.3.9.)}

\bigskip 

We now pass from $\vn(G)$ to $\cl B (L^2(G))$:
The algebra $M^{\cb}A(G)$ acts on $\cl B(L^2(G))$ {\it via} the maps 
$S_{N(\gs)},\, \gs\in M^{\cb}A(G)$ (see \cite{bf} and \cite{j}), 
and this action is an extension of the action of $M^{\cb}A(G)$ on $\vn (G)$: 
when $T\in\vn (G)$ and $\sigma\in M^{\cb}A(G)$, 
we have $S_{N(\gs)}(T)=\gs\cdot T$.
Hence, letting
\[
\tilde{\frak N}(\Gs) =
 \{T\in\cl B(L^2(G)): S_{N(\gs)}(T)=0  , \ \mbox{ for all } \gs\in \Gs\},
\]
we have $\frak{N}(\Sigma)=\tilde{\frak N}(\Gs)\cap \vn(G).$ 

The following is analogous to Proposition \ref{p_njan}; note, however,
that the dualities are different.

\begin{proposition}\label{new}  If $\Gs\subseteq M^{\cb}A(G)$,  
\[  \tilde{\frak{N}}(\Gs)=\tilde{\frak{N}}(\Gs A).\]
\end{proposition}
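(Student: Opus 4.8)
The plan is to mirror the two-inclusion structure of Proposition \ref{p_njan}, but to argue entirely at the level of $\cl B(L^2(G))$ and its predual $T(G)$, exploiting two algebraic facts. First, $N$ is multiplicative: $N(uv)=N(u)N(v)$ pointwise on $G\times G$, since $N(uv)(s,t)=(uv)(ts\an)=u(ts\an)v(ts\an)$. Second, the Schur multiplier operators compose according to $S_{w_1}S_{w_2}=S_{w_2w_1}=S_{w_1w_2}$, which follows directly from the defining duality $\du{S_w(T)}{h}=\du{T}{wh}$ together with commutativity of the pointwise algebra $\frak S(G)$. In particular, for $\gs\in\Gs$ and $u\in A(G)$ one gets $S_{N(\gs u)}=S_{N(u)}S_{N(\gs)}=S_{N(\gs)}S_{N(u)}$.

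For the inclusion $\tilde{\frak N}(\Gs)\subseteq\tilde{\frak N}(\Gs A)$, I would take $T\in\tilde{\frak N}(\Gs)$ and first note that $S_{N(\gs u)}(T)=S_{N(u)}(S_{N(\gs)}(T))=S_{N(u)}(0)=0$ for every $\gs\in\Gs$ and $u\in A(G)$. Since $N$ is an isometry of $M^{\cb}A(G)$ into $\frak S(G)$ and $A(G)$ embeds contractively in $M^{\cb}A(G)$, the map $w\mapsto S_{N(w)}(T)$ is norm continuous on $A(G)$, with $\nor{S_{N(w)}(T)}\le\nor{w}_{A(G)}\nor{T}$. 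Hence $\{w\in A(G):S_{N(w)}(T)=0\}$ is a norm-closed subspace containing every product $\gs u$, so it contains the closed ideal $\Gs A$; this says precisely that $T\in\tilde{\frak N}(\Gs A)$.

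For the reverse inclusion, let $T\in\tilde{\frak N}(\Gs A)$, fix $\gs\in\Gs$, and set $R=S_{N(\gs)}(T)$. For every $u\in A(G)$ we have $\gs u\in\Gs A$, whence $S_{N(u)}(R)=S_{N(u)}S_{N(\gs)}(T)=S_{N(\gs u)}(T)=0$; dualising gives $\du{R}{N(u)h}=\du{S_{N(u)}(R)}{h}=0$ for all $u\in A(G)$ and all $h\in T(G)$. It remains to deduce $R=0$, and this is where the different duality matters: instead of appealing to density of products in $A(G)$, I would show that the vectors $N(u)h$ are total in $T(G)$. The key observation is that for an elementary tensor $h(x,y)=f(x)\overline{g(y)}$ with $f,g\in L^2(G)$ compactly supported, say $\supp f\subseteq K_1$ and $\supp g\subseteq K_2$, any $u\in A(G)$ with $u\equiv 1$ on the compact set $K_2K_1\an$ (such $u$ exists by \cite[(3.2) Lemme]{eymard}) satisfies $(N(u)h)(x,y)=u(yx\an)f(x)\overline{g(y)}=f(x)\overline{g(y)}$ marginally almost everywhere, i.e. $N(u)h=h$. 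Since such elementary tensors are dense in $T(G)$, the span of $\{N(u)h\}$ is dense; as $\du{R}{\cdot}$ is continuous and vanishes on it, $R=0$. Thus $S_{N(\gs)}(T)=0$ for every $\gs\in\Gs$, giving $T\in\tilde{\frak N}(\Gs)$.

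The routine verifications (multiplicativity of $N$, the composition rule for $S_w$, and the norm estimate) present no difficulty. The one genuinely new ingredient, compared with Proposition \ref{p_njan}, is the totality of $\{N(u)h:u\in A(G),\,h\in T(G)\}$ in $T(G)$, and I expect the exact identity $N(u)h=h$ for compactly supported $h$ to be the crux that makes the reverse inclusion go through.
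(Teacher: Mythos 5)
Your proof is correct and follows essentially the same route as the paper: the first inclusion by an \emph{a fortiori}/continuity argument as in Proposition \ref{p_njan}, and the reverse inclusion by dualising the relation $S_{N(u)}S_{N(\gs)}=S_{N(\gs u)}$ (the paper writes this as $\du{S_{N(\gs)}(T)}{N(u)h}=\du{S_{N(u\gs)}(T)}{h}$) and then invoking the density of $\sspp\{N(u)h : u\in A(G),\, h\in T(G)\}$ in $T(G)$. The only difference is that the paper asserts this density without proof, whereas you supply one via the identity $N(u)h=h$ for elementary tensors $h=f\otimes\bar g$ with compactly supported $f,g$ and $u\equiv 1$ on $K_2K_1^{-1}$ --- a correct and worthwhile addition.
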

\proof The inclusion $\tilde{\frak{N}}(\Gs)\subseteq \tilde{\frak{N}}(\Gs A)$ 
follows as in the proof of Proposition \ref{p_njan}.  
To prove that 
$\tilde{\frak N}(\Gs A)\subseteq \tilde{\frak{N}}(\Gs)$,
let $T\in \tilde{\frak{N}}(\Gs A)$; then $S_{N(v\gs)}(T)=0$ for all 
$\gs \in\Gs$ and $v \in A(G)$. Thus, if $h\in T(G)$, 
\[\du{S_{N(\gs)}(T)}{N(v) h} =\du{T}{N(\gs v) h} = \du{S_{N(v\gs )}(T)}{ h} =  0\, .\]
Since the linear span of the set $\{N(v) h: v \in A(G), h \in T(G) \}$ is dense in  $T(G)$,  
it follows that $S_{N(\gs)}(T)=0$ and so $T \in \tilde{\frak{N}}(\Gs)$. \qed

\begin{proposition}\label{prop2}
For every closed ideal $J$ of $A(G),\quad \tilde{\frak N}(J)= \Bim(J^\bot) $. 
\end{proposition}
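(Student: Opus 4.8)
The plan is to realise $\tilde{\frak N}(J)$ as the annihilator of a suitable subspace of $T(G)$, to identify that subspace with $\Sat(J)$, and then to invoke Theorem \ref{th_satlcg} to finish. First I would unwind the definition using the duality $\du{S_{N(u)}(T)}{h}=\du{T}{N(u)h}$: an operator $T$ lies in $\tilde{\frak N}(J)$ exactly when $\du{T}{N(u)h}=0$ for every $u\in J$ and every $h\in T(G)$. Hence
\[
\tilde{\frak N}(J)=\cl W^{\perp},\qquad \cl W:=\overline{\sspp}\{N(u)h : u\in J,\ h\in T(G)\}\subseteq T(G),
\]
since the pairing is continuous. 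It therefore suffices to prove $\cl W=\Sat(J)$, for then $\tilde{\frak N}(J)=\cl W^{\perp}=\Sat(J)^{\perp}=\Bim(J^{\perp})$ by Theorem \ref{th_satlcg}.

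For the inclusion $\Sat(J)\subseteq\cl W$ I would first observe that each generator $N(u)\chi_{L\times L}$ of $\Sat(J)$ (with $L$ compact) already lies in $\cl W$, because $\chi_{L\times L}$ is the kernel of the rank-one operator $\chi_L\otimes\chi_L\in T(G)$. It then remains to check that $\cl W$ is a closed $L^\infty(G)$-bimodule: multiplying a kernel $N(u)h$ by $a(s)b(t)$ with $a,b\in L^\infty(G)$ produces $N(u)\,(a(s)b(t)h)$, and $a(s)b(t)h$ again belongs to $T(G)$ since $a(s)b(t)\in\frak S(G)$. Thus $\cl W$ is invariant under the bimodule action and is closed by construction, so it contains the closed $L^\infty(G)$-bimodule generated by the kernels $N(u)\chi_{L\times L}$, which is precisely $\Sat(J)$.

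The reverse inclusion $\cl W\subseteq\Sat(J)$ is the main point and is where I expect the work to lie. Fix $u\in J$; I would show $N(u)h\in\Sat(J)$ for every $h\in T(G)$. Acting on the generator $N(u)\chi_{L\times L}$ by the bimodule maps $M_a,M_b$ shows that $\Sat(J)$ contains every kernel $N(u)(s,t)\phi(s)\psi(t)$ with $\phi,\psi$ bounded and compactly supported. As $\phi,\psi$ range over such functions, the products $\phi(s)\psi(t)$ have dense linear span in $T(G)$, since bounded compactly supported functions are dense in $L^2(G)$ and finite sums of rank-one kernels are dense in $T(G)$. Given $h\in T(G)$, I would pick finite sums $h_n\to h$ in $T(G)$ of such products; then $N(u)h_n\in\Sat(J)$, and because $N(u)\in\frak S(G)$ the multiplier $m_{N(u)}$ is bounded on $T(G)$, whence $N(u)h_n\to N(u)h$. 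Since $\Sat(J)$ is closed, $N(u)h\in\Sat(J)$, giving $\cl W\subseteq\Sat(J)$.

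Combining the two inclusions yields $\cl W=\Sat(J)$, and the chain of equalities above completes the proof. The only delicate step is the density/approximation argument of the last paragraph: one must pass from the bimodule-type kernels $N(u)(s,t)\phi(s)\psi(t)$ that visibly sit in $\Sat(J)$ to an arbitrary $N(u)h$, and this is exactly where the boundedness of $m_{N(u)}$ (i.e. $N(u)\in\frak S(G)$) and the closedness of $\Sat(J)$ are used; everything else is a formal manipulation of the duality and the bimodule structure.
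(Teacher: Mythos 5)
Your proof is correct and follows essentially the same route as the paper: both realise $\tilde{\frak N}(J)$ as $\Sat(J)^\perp$ via the duality $\du{S_{N(u)}(T)}{h}=\du{T}{N(u)h}$ and then conclude with Theorem~\ref{th_satlcg}. The only difference is that the paper simply cites \cite[Proposition 3.1]{akt} for the identity $\Sat(J)=\overline{\sspp}\{N(u)h: u\in J,\ h\in T(G)\}$, whereas you prove it directly; your two-inclusion argument (the rank-one kernel observation giving $\Sat(J)\subseteq\cl W$, and the approximation by product kernels together with boundedness of $m_{N(u)}$ and closedness of $\Sat(J)$ for the converse) is a correct self-contained proof of that cited fact.
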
	
\proof If $T\in\cl B(L^2(G)), h\in T(G)$  and $u\in A(G)$ then 
\[\langle S_{N(u)}(T),h\rangle = \langle T, N(u)h\rangle .\]
By \cite[Proposition 3.1]{akt}, 
$\Sat(J)$ is the closed linear span of $\{N(u)h: u\in J , h\in T(G)\}$.
We conclude that  $T\in (\Sat(J))^\bot$ if and only if $S_{N(u)}(T) = 0$ for all 
$u\in J$, {\it i.e.} if and only if $T\in \tilde{\frak{N}}(J)$. 
By Theorem \ref{th_satlcg}, $(\Sat(J))^\bot=\Bim(J^\bot)$, and the proof is complete.
$\qquad\Box$

\begin{theorem}\label{thbimn} 
For any subset $\Gs$ of $M^{\cb}A(G)$, 
\[  \tilde{\frak N}(\Gs)=  \Bim(\frak{N}(\Gs)).\]
\end{theorem}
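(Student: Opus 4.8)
The plan is to combine the three preceding results to establish the identity $\tilde{\frak N}(\Gs) = \Bim(\frak N(\Gs))$. The natural strategy is to reduce the family $\Gs$ to the single closed ideal $J = \Gs A$ it generates, so that Proposition \ref{prop2} becomes applicable. First I would set $J = \Gs A$ and invoke Proposition \ref{new}, which gives $\tilde{\frak N}(\Gs) = \tilde{\frak N}(\Gs A) = \tilde{\frak N}(J)$. This reduces the problem from an arbitrary family of multipliers to a single closed ideal, exactly the setting handled by Proposition \ref{prop2}.

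Next I would apply Proposition \ref{prop2} to this ideal $J$, yielding $\tilde{\frak N}(J) = \Bim(J^\bot)$. It now remains to identify $J^\bot$ with $\frak N(\Gs)$. This is precisely the content of relation (\ref{eq_prean}) together with Proposition \ref{p_njan}: by (\ref{eq_prean}) we have $\frak N(\Gs) = (\Gs A)^\bot = J^\bot$, so $\Bim(J^\bot) = \Bim(\frak N(\Gs))$. Chaining these equalities,
\[
\tilde{\frak N}(\Gs) = \tilde{\frak N}(\Gs A) = \Bim\big((\Gs A)^\bot\big) = \Bim(\frak N(\Gs)),
\]
which is the desired conclusion.

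I do not expect a serious obstacle here, since the theorem is essentially the assembly of machinery already proved in this section: the passage from $\Gs$ to the ideal it generates (Propositions \ref{p_njan} and \ref{new}), the annihilator description (\ref{eq_prean}), and the bridge from $\vn(G)$-annihilators to weak* closed $\cl D$-bimodules (Proposition \ref{prop2}, which itself rests on Theorem \ref{th_satlcg}). The only point demanding a moment's care is the interplay between the two \emph{different} dualities flagged in the remark before Proposition \ref{new}: the annihilator $\frak N(\Gs) = J^\bot$ lives inside $\vn(G)$ via the $A(G)$--$\vn(G)$ duality, whereas $\tilde{\frak N}(\Gs)$ and $\Bim$ refer to the predual duality of $\cl B(L^2(G))$. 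One should verify that $\Bim(\frak N(\Gs))$ is unambiguous --- that is, that forming the weak* closed $\cl D$-bimodule generated by $\frak N(\Gs)$ inside $\cl B(L^2(G))$ gives the same object whether one regards $\frak N(\Gs)$ as a subset of $\vn(G)$ or of $\cl B(L^2(G))$ --- but this is immediate since $\vn(G) \subseteq \cl B(L^2(G))$ and $\Bim$ is defined for arbitrary subsets of $\cl B(L^2(G))$.
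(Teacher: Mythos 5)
Your proof is correct and follows essentially the same route as the paper's: both reduce $\Gs$ to the ideal $\Gs A$ via Proposition \ref{new}, apply Proposition \ref{prop2} to that ideal, and identify $(\Gs A)^\bot$ with $\frak{N}(\Gs)$ via relation (\ref{eq_prean}). The extra remark about the two dualities is a fine sanity check but, as you note, poses no issue.
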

\proof 
It  follows from relation (\ref{eq_prean}) that 
$\Bim((\Gs  A)^\bot) = \Bim(\frak{N}(\Gs))$.
But $\Bim((\Gs  A)^\bot)=\tilde{\frak N}(\Gs A)$ from Proposition \ref{prop2} and 
$\tilde{\frak N}(\Gs A)=\tilde{\frak N}(\Gs)$ from Proposition \ref{new}.
 \qed

\medskip \smallskip

More can be said when the zero set $Z(\Gs)$ is a subgroup (or a coset) of $G$. 

\begin{lemma}\label{trito}
Let $J\subseteq A(G)$ be a closed ideal. Suppose that its zero set $E=Z(J)$ is 
a set of synthesis. Then
\begin{equation}\label{eq}
\tilde{\frak N}(J) 
=\overline{\sspp\{M_g\gl_x:x\in E,g\in L^\infty(G)\}}^{w*}
\end{equation}
\end{lemma}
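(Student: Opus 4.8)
The plan is to reduce Lemma~\ref{trito} to Lemma~\ref{proto} via the machinery already assembled, rather than to argue directly. By Theorem~\ref{thbimn} we have $\tilde{\frak N}(J)=\Bim(\frak N(J))$, and since $E=Z(J)$ is a set of synthesis, Lemma~\ref{proto} identifies $\frak N(J)=\overline{\sspp\{\gl_x:x\in E\}}^{w*}$. Thus it suffices to compute the weak* closed $\cl D$-bimodule generated by $\{\gl_x:x\in E\}$ and show it equals the right-hand side of~(\ref{eq}). So the core of the proof becomes the bimodule identity
\[
\Bim\left(\overline{\sspp\{\gl_x:x\in E\}}^{w*}\right)
=\overline{\sspp\{M_g\gl_x:x\in E,\,g\in L^\infty(G)\}}^{w*}.
\]

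First I would establish the inclusion $\supseteq$. Each generator $M_g\gl_x$ on the right is obtained from $\gl_x\in\frak N(J)$ by left multiplication by an element of $\cl D$, hence lies in $\Bim(\frak N(J))$; taking weak* closed linear span gives one containment. For the reverse inclusion $\subseteq$, the key point is that the right-hand side is already a weak* closed $\cl D$-bimodule. It is visibly closed under left multiplication by $\cl D$, since $M_f M_g\gl_x=M_{fg}\gl_x$. The part requiring care is \emph{right} multiplication: one must check that $M_g\gl_x M_f$ again lies in the span. Here I would use the covariance relation between $\vn(G)$ and $\cl D$, namely $\gl_x M_f=M_{f(x^{-1}\cdot)}\gl_x$ (the standard commutation rule $\gl_s M_f \gl_s^{*}=M_{L_s f}$ for the left regular representation acting on multiplication operators), so that $M_g\gl_x M_f=M_{g\cdot(L_x f)}\gl_x$ is once more of the generating form. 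Granting weak* continuity of these multiplications (which is automatic, as multiplication by a fixed bounded operator is weak* continuous), the right-hand side is a weak* closed $\cl D$-bimodule containing every $\gl_x$, $x\in E$, hence it contains $\Bim$ of their span.

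The main obstacle I anticipate is not conceptual but a matter of justifying that right multiplication by $\cl D$ stays inside the \emph{weak* closed} span rather than merely the algebraic span; one must ensure that the covariance identity, established for the generators, propagates to the weak* closure. This follows because right multiplication by the fixed operator $M_f$ is weak* continuous, so it maps the weak* closure of a set into the weak* closure of its image; since it maps the generating set $\{M_g\gl_x\}$ into itself (up to relabelling $g$), it preserves the whole weak* closed span. Once both inclusions are in hand, combining them with $\tilde{\frak N}(J)=\Bim(\frak N(J))$ and the description of $\frak N(J)$ from Lemma~\ref{proto} yields~(\ref{eq}) directly. Throughout I would lean on the fact, recorded in the preliminaries, that $\Bim(\cl T)=\overline{[\frak S(G)\cl T]}^{w^*}$ together with the observation that for generators $\gl_x$ the action of Schur multipliers reduces, after the covariance computation, to the two-sided $\cl D$-action displayed above.
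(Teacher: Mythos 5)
Your proof is correct and follows essentially the same route as the paper: reduce via Theorem~\ref{thbimn} and Lemma~\ref{proto} to computing $\Bim\bigl(\overline{\sspp\{\gl_x:x\in E\}}^{w*}\bigr)$, then use the covariance relation $\gl_x M_f=M_{f(x^{-1}\cdot)}\gl_x$ to collapse the two-sided $\cl D$-action to the one-sided form $M_\phi\gl_x$. The paper phrases this as rewriting the monomials $M_f\gl_s M_g$ as $M_{fg_s}\gl_s$, while you verify that the one-sided span is already a weak* closed $\cl D$-bimodule; these are the same argument in two equivalent packagings.
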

\proof
By Theorem \ref{thbimn}, $\tilde{\frak N}(J) = \Bim(\frak N(J))$ 
and thus, by Lemma \ref{proto},
$\tilde{\frak N}(J)$ is the weak* closed linear span of
the monomials of the form 
$M_f\gl_sM_g$ where  $f,g\in L^\infty(G)$ and $s\in E$. But, because of the 
commutation relation
$\gl_sM_g=M_{g_s}\gl_s \ \ (\mbox{where } g_s(t)=g(s\an t))$,
we may write $M_f\gl_sM_g=M_\phi\gl_s$ where $\phi=fg_s\in L^\infty(G)$.\qed

\begin{theorem}\label{tetarto}
Let $J\subseteq A(G)$ be a closed ideal. Suppose that its zero set $E=Z(J)$ is 
the  coset of a closed subgroup of $G$. Then
$\tilde{\frak N}(J)$ is a (weak* closed) TRO. In particular if $E$ is a closed subgroup
then  $\tilde{\frak N}(J)$ is a von Neumann subalgebra of $\cl B(L^2(G))$ and 
\[
\tilde{\frak N}(J)=(\cl D\cup\frak N(J))''=(\cl D\cup\{\gl_x:x\in E\})''. 
\]
\end{theorem}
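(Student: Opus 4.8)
The plan is to reduce everything to the concrete description of $\tilde{\frak N}(J)$ furnished by Lemma \ref{trito}. Since $E$ is the coset of a closed subgroup, it is a set of synthesis (by \cite{tatsuuma2}, exactly as in the proof of Proposition \ref{deutero}), so Lemma \ref{trito} applies and
\[
\tilde{\frak N}(J)=\overline{\sspp\{M_g\gl_x:x\in E,\ g\in L^\infty(G)\}}^{w*}.
\]
Writing $E=Hg$ with $H$ a closed subgroup, it then suffices to verify the ternary relation $ST^*R\in\tilde{\frak N}(J)$ first on the monomial generators $M_g\gl_x$ and afterwards to propagate it to the weak* closure.

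For the generator computation I would take $S=M_f\gl_{sg}$, $T=M_h\gl_{tg}$, $R=M_k\gl_{rg}$ with $s,t,r\in H$. Using $(\gl_x)^*=\gl_{x\an}$ and $M_h^*=M_{\bar h}$, the adjoint is $T^*=\gl_{g\an t\an}M_{\bar h}$; collapsing the group factors gives $ST^*=M_f\gl_{st\an}M_{\bar h}$, and after multiplying by $R$ and pushing the remaining $\gl_{st\an}$ to the right past the multiplication operators via the commutation relation $\gl_sM_g=M_{g_s}\gl_s$, one obtains
\[
ST^*R=M_\phi\,\gl_{st\an rg},\qquad \phi=f\cdot(\bar h k)_{st\an}\in L^\infty(G).
\]
The decisive point is purely group-theoretic: since $H$ is a subgroup, $st\an r\in H$, hence $st\an rg\in Hg=E$, so that $ST^*R$ is again one of the monomial generators.

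To pass to the weak* closure I would fix, in turn, two of the three arguments in the generating span and let the third range over $\tilde{\frak N}(J)$, using that left multiplication, right multiplication and the adjoint are each weak* continuous on $\cl B(L^2(G))$; three successive approximations then yield $ST^*R\in\tilde{\frak N}(J)$ for all $S,T,R\in\tilde{\frak N}(J)$, so $\tilde{\frak N}(J)$ is a TRO. When $E=H$ is a subgroup we have $e\in E$, whence $I=M_1\gl_e\in\tilde{\frak N}(J)$; a TRO containing the identity is a selfadjoint algebra (as noted after the definition of TRO), and being weak* closed it is a von Neumann algebra.

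Finally the two displayed identities follow by double inclusion. The von Neumann algebra $(\cl D\cup\{\gl_x:x\in E\})''$ contains every $M_g\gl_x$ and is weak* closed, hence contains $\tilde{\frak N}(J)$; conversely $\tilde{\frak N}(J)$ is a von Neumann algebra containing $\cl D$ (take $x=e$) and every $\gl_x$, $x\in E$ (take $g=1$), so it contains $(\cl D\cup\{\gl_x:x\in E\})''$. The equality with $(\cl D\cup\frak N(J))''$ is then immediate from Lemma \ref{proto}, which identifies $\frak N(J)$ as the weak* closed span of $\{\gl_x:x\in E\}$, so that $\cl D\cup\frak N(J)$ and $\cl D\cup\{\gl_x:x\in E\}$ generate the same von Neumann algebra. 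I expect the only genuine subtlety to be the weak* extension in the TRO step: separate (rather than joint) weak* continuity of the triple product forces one to approximate the three variables one at a time, whereas the generator computation itself is routine bookkeeping with the commutation relation.
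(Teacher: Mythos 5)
Your proposal is correct and follows essentially the same route as the paper's proof: reduction to $E=Hg$ via the Takesaki--Tatsuuma synthesis result for cosets, Lemma \ref{trito} to reduce the TRO relation to monomials $M_f\gl_{sg}$, the commutation relation $\gl_sM_g=M_{g_s}\gl_s$ for the generator computation, and the bicommutant theorem together with Lemma \ref{proto} for the final equalities. The only difference is one of detail: you spell out the passage to the weak* closure (separate weak* continuity of the triple product, handled one variable at a time) and the double inclusions, which the paper's proof leaves implicit.
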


\proof  
As in the proof of Proposition \ref{deutero}, we may take $E=Hg$. 
By Lemma \ref{trito}, it suffices to check the TRO relation for monomials 
of the form $M_f\gl_{sg}$; but, by the commutation relation, 
triple products 
$(M_f\gl_{sg})(M_g\gl_{tg})^*(M_h\gl_{rg})$ 
of such monomials  may be written in the form
 $M_\phi\gl_{st\an rg}$ and so belong to  $\tilde{\frak N}(J)$ when $sg,tg$ and $rg$
are in the coset $E$. 
Finally, when $E$ is a closed subgroup, the last equalities follow 
from relation (\ref{eq}) and the bicommutant theorem.
\qed

\medskip

We next extend the notions of $\sigma$-harmonic functionals \cite{chulau} 
and operators \cite{neurun} to jointly harmonic functionals and operators:

\begin{definition}\label{d_jh}
Let $\Gs\subseteq M^{\cb}A(G)$. 
An element  $T\in \cl \vn(G)$ will be 
called a \emph{$\Gs$-harmonic functional } if 
 $\gs\cdot T=T$ for all $\gs\in\Gs$. We write   $\cl H_{\Sigma}$
for the set of all {$\Gs$-harmonic} functionals.

An operator $T\in \cl B(L^2(G))$ will be called \emph{$\Gs$-harmonic} if 
$S_{N(\gs)}(T)=T$ for all $\gs\in\Gs$. We write   $\widetilde{\cl H}_{\Sigma}$
for the set of all {$\Gs$-harmonic} operators. 
\end{definition}

Explicitly, if  $\Gs'=\{\sigma -\mathbf 1:\sigma\in\Gs\}$, 
\begin{align*}
\cl H_{\Sigma} &= 
\{T\in \vn(G) : \gs\cdot T=T \;\text{for all }\; \gs\in\Gs\} = \frak N(\Gs') \\
\text{and }\quad
\widetilde{\cl H}_{\Sigma} \ &= 
\{T\in \cl B(L^2(G)) : S_{N(\gs)}(T)=T \;\text{for all }\; \gs\in\Gs\} = \tilde{\frak N}(\Gs').
\end{align*}
The following  is an immediate consequence of Theorem \ref{thbimn}.

\begin{corollary}\label{c_jho} 
Let $\Gs\subseteq M^{\cb}A(G)$. 
Then the weak* closed $\cl D$-bimodule $\Bim(\cl H_{\Gs})$ 
generated by $\cl H_{\Gs}$ coincides with $\widetilde{\cl H}_{\Gs}$. 
\end{corollary}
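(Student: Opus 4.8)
The plan is to derive Corollary \ref{c_jho} directly from Theorem \ref{thbimn} by applying the latter to the translated family $\Gs' = \{\gs - \mathbf{1} : \gs \in \Gs\}$. The key observation is that the harmonic sets are exactly null sets for this shifted family: by the displayed identities immediately preceding the corollary, $\cl H_\Gs = \frak N(\Gs')$ and $\widetilde{\cl H}_\Gs = \tilde{\frak N}(\Gs')$. So the two objects I must match up are already expressed as the $\vn(G)$-level and $\cl B(L^2(G))$-level null sets of a single subset of $M^{\cb}A(G)$.

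With this reformulation in hand, the proof is essentially a substitution. Theorem \ref{thbimn}, applied to the subset $\Gs' \subseteq M^{\cb}A(G)$, yields
\[
\tilde{\frak N}(\Gs') = \Bim(\frak N(\Gs')).
\]
Rewriting the left-hand side as $\widetilde{\cl H}_\Gs$ and the right-hand side, via $\frak N(\Gs') = \cl H_\Gs$, as $\Bim(\cl H_\Gs)$, I obtain exactly $\widetilde{\cl H}_\Gs = \Bim(\cl H_\Gs)$, which is the assertion of the corollary.

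The only point requiring a brief check is that $\Gs'$ is a legitimate input to Theorem \ref{thbimn}, i.e.\ that $\Gs' \subseteq M^{\cb}A(G)$. This follows because $M^{\cb}A(G)$ is a Banach algebra containing the constants (the constant function $\mathbf{1}$ is a completely bounded multiplier, inducing the identity map on $\vn(G)$), so each difference $\gs - \mathbf{1}$ again lies in $M^{\cb}A(G)$. I expect no genuine obstacle here: the substance of the result is entirely carried by Theorem \ref{thbimn}, and the corollary is the statement of that theorem read through the dictionary between harmonic elements and null elements of the shifted family. Thus the proof is short, and if any care is needed it is only in confirming that the two displayed equalities defining $\cl H_\Gs$ and $\widetilde{\cl H}_\Gs$ are precisely $\frak N(\Gs')$ and $\tilde{\frak N}(\Gs')$, which is immediate from the definitions of $\frak N$ and $\tilde{\frak N}$ once one notes that $(\gs - \mathbf 1)\cdot T = 0$ is equivalent to $\gs \cdot T = T$, and likewise $S_{N(\gs - \mathbf 1)}(T) = 0$ is equivalent to $S_{N(\gs)}(T) = T$ by linearity of the maps involved and the fact that $N(\mathbf 1)$ induces the identity operator.
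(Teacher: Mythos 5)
Your proof is correct and is exactly the paper's argument: the paper also observes $\cl H_{\Gs}=\frak N(\Gs')$ and $\widetilde{\cl H}_{\Gs}=\tilde{\frak N}(\Gs')$ for the shifted family $\Gs'=\{\gs-\mathbf 1:\gs\in\Gs\}$ and then cites Theorem \ref{thbimn}, calling the corollary an immediate consequence. Your extra checks (that $\Gs'\subseteq M^{\cb}A(G)$ and that $S_{N(\gs-\mathbf 1)}=S_{N(\gs)}-\id$) are sound and merely make explicit what the paper leaves implicit.
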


Let $\gs$ be a positive definite normalised function and $\Gs = \{\gs\}$.
In \cite[Theorem 4.8]{neurun}, the authors prove,
under some restrictions on $G$ or $\gs$ (removed in \cite{kalantar}), 
that $\widetilde{\cl H}_{\Gs}$ coincides with the von
Neumann algebra $(\cl D\cup\cl H_{\Gs})''$. 
We give a short proof of a more general result.

Denote by $P^1(G)$ the set of all positive definite normalised functions on $G$. 
Note that $P^1(G)\subseteq M^{\cb}A(G)$. 

\medskip 

\begin{theorem} 
Let $\Gs\subseteq P^1(G)$. 
The space   $\widetilde{\cl H}_{\Gs}$ is a von Neumann subalgebra of $\cl B(L^2(G))$,
 and   $\widetilde{\cl H}_{\Gs}=(\cl D \cup\cl  H_{\Gs})''$. 
\end{theorem}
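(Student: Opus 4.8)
The plan is to identify $\widetilde{\cl H}_{\Gs}$ with $\tilde{\frak N}(\Gs')$ where $\Gs'=\{\gs-\mathbf 1:\gs\in\Gs\}$, and then to show that the relevant ideal $J=\Gs' A$ has zero set equal to a closed subgroup of $G$, so that Theorem \ref{tetarto} applies directly. First I would recall that, by the definitions preceding Corollary \ref{c_jho}, we have $\widetilde{\cl H}_{\Gs}=\tilde{\frak N}(\Gs')$ and $\cl H_{\Gs}=\frak N(\Gs')$. By Proposition \ref{new} (applied to $\Gs'$), $\tilde{\frak N}(\Gs')=\tilde{\frak N}(\Gs' A)$, so it suffices to understand the closed ideal $J=\Gs' A$ and, crucially, its zero set $Z(J)=Z(\Gs')$.

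The central computation is to show that the common zero set $Z(\Gs')=\{s\in G:\gs(s)=1\text{ for all }\gs\in\Gs\}$ is a closed subgroup of $G$. Here I would use the structure of positive definite normalised functions: for a single $\gs\in P^1(G)$, the set $\{s:\gs(s)=1\}$ is known to be a closed subgroup, since if $\gs$ is realised as $\gs(s)=\bra{\pi(s)\xi}{\xi}$ with $\pi$ a unitary representation and $\nor{\xi}=1$, then $\gs(s)=1$ forces $\pi(s)\xi=\xi$ by the Cauchy--Schwarz equality case, and the set of such $s$ is exactly the (closed) stabiliser subgroup of the unit vector $\xi$. Intersecting over all $\gs\in\Gs$ preserves the subgroup property, so $E:=Z(\Gs')$ is a closed subgroup of $G$. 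I expect this to be the main obstacle only in the sense of invoking the right representation-theoretic fact cleanly; the equality-case argument is standard but must be stated carefully.

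Once $E=Z(\Gs')$ is known to be a closed subgroup, I would apply Theorem \ref{tetarto} to the ideal $J=\Gs' A$, whose zero set is $E$ (recall from the remark after Lemma \ref{proto} that $Z(\Gs')$ coincides with the zero set of the ideal $\Gs' A$). Since $E$ is a closed subgroup, Theorem \ref{tetarto} gives that $\tilde{\frak N}(J)$ is a von Neumann subalgebra of $\cl B(L^2(G))$ and that
\[
\tilde{\frak N}(J)=(\cl D\cup\frak N(J))''=(\cl D\cup\{\gl_x:x\in E\})''.
\]
Combining with $\widetilde{\cl H}_{\Gs}=\tilde{\frak N}(\Gs')=\tilde{\frak N}(\Gs' A)=\tilde{\frak N}(J)$ and $\cl H_{\Gs}=\frak N(\Gs')=\frak N(J)$ yields exactly $\widetilde{\cl H}_{\Gs}=(\cl D\cup\cl H_{\Gs})''$, which is the assertion. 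The only remaining point to check is that $Z(\Gs')$ is genuinely the zero set of the closed ideal $\Gs' A$ generated by compactly supported elements, but this is precisely the content of the discussion following Proposition \ref{p_njan} together with Remark \ref{remideal}, so no new work is needed there.
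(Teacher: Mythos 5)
Your proposal is correct and follows essentially the same route as the paper: identify $\widetilde{\cl H}_{\Gs}=\tilde{\frak N}(\Gs')=\tilde{\frak N}(\Gs' A)$ and $\cl H_{\Gs}=\frak N(\Gs')$, observe that $Z(\Gs')$ is a closed subgroup, and invoke Theorem \ref{tetarto} (whose coset hypothesis already takes care of the synthesis issue via \cite{tatsuuma2}). The only difference is cosmetic: the paper cites \cite[Proposition 32.6]{hr2} for the fact that $\{s\in G:\gs(s)=1 \text{ for all } \gs\in\Gs\}$ is a closed subgroup, whereas you prove it directly via the Cauchy--Schwarz equality case in a representation realising $\gs$, which is a valid self-contained substitute for that citation.
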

\proof Note that  $\cl H_{\Gs}=\frak N(\Gs')=\frak N(\Gs' A)$ and 
$\widetilde{\cl H}_{\Gs} = \tilde{\frak N}(\Gs')= \tilde{\frak N}(\Gs' A)$.
Since $Z(\Gs')$ is a closed subgroup \cite[Proposition 32.6]{hr2}, 
it is a set of spectral synthesis \cite{tatsuuma2}. Thus the result 
follows from  Theorem \ref{tetarto}. 
\qed

{\remark It is worth pointing out that $\widetilde{\cl H}_{\Gs}$ 
has an abelian commutant,
since it contains a masa. In particular,  it is a type I, 
and hence an injective, von Neumann algebra.} 

\bigskip

In \cite[Theorem 4.3]{akt} it was shown that a weak* closed subspace
$\cl U\subseteq \cl B(L^2(G))$ is jointly invariant if and only if it is of the form 
$\cl U = \Bim(J^{\perp})$ for a  closed ideal $J\subseteq A(G)$.  By 
Proposition \ref{prop2}, $\Bim(J^{\perp})=\tilde{\frak{N}}(J)$,
giving another equivalent description. In fact,  the ideal $J$  may be replaced
by a subset of  $M^{\cb}A(G)$:

\begin{proposition}\label{th_eqc}  
Let $\cl U\subseteq \cl B(L^2(G))$ be a weak* closed subspace. 
The following are equivalent:     

(i) \ \ $\cl U$ is jointly invariant; 

(ii) \ there exists a closed ideal $J\subseteq A(G)$ such that $\cl U = \tilde{\frak{N}}(J)$;

(iii) \ there exists a subset $\Sigma\subseteq M^{\cb}A(G)$ such that 
$\cl U = \tilde{\frak{N}}(\Gs)$.

\end{proposition}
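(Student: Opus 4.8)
The plan is to read this proposition almost entirely off results already in hand, closing a short cycle of implications. I would prove (i)$\Leftrightarrow$(ii) directly, then observe that (ii)$\Rightarrow$(iii) is essentially tautological, and finally derive the converse (iii)$\Rightarrow$(ii) from Proposition \ref{new}.

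For (i)$\Leftrightarrow$(ii), I would simply combine the two facts recalled just above the statement. By \cite[Theorem 4.3]{akt}, a weak* closed subspace $\cl U$ is jointly invariant precisely when $\cl U=\Bim(J^{\perp})$ for some closed ideal $J\subseteq A(G)$; and by Proposition \ref{prop2}, $\Bim(J^{\perp})=\tilde{\frak N}(J)$ for every such $J$. Chaining these gives that $\cl U$ is jointly invariant if and only if $\cl U=\tilde{\frak N}(J)$ for some closed ideal $J$, which is exactly assertion (ii).

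The implication (ii)$\Rightarrow$(iii) requires no work beyond the remark that $A(G)$ embeds contractively into $M^{\cb}A(G)$, so that a closed ideal $J\subseteq A(G)$ is in particular a subset of $M^{\cb}A(G)$; taking $\Gs=J$ then yields $\tilde{\frak N}(\Gs)=\tilde{\frak N}(J)=\cl U$. For the converse (iii)$\Rightarrow$(ii), given $\Gs\subseteq M^{\cb}A(G)$ with $\cl U=\tilde{\frak N}(\Gs)$, I would set $J=\Gs A$. This is a closed ideal of $A(G)$ (as noted in the discussion preceding Proposition \ref{p_njan}), and Proposition \ref{new} gives $\tilde{\frak N}(\Gs)=\tilde{\frak N}(\Gs A)=\tilde{\frak N}(J)$, whence $\cl U=\tilde{\frak N}(J)$, which is (ii).

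There is no genuine obstacle here: every step invokes an earlier result, and the only points meriting care are the identification of $A(G)$ as a subalgebra of $M^{\cb}A(G)$ (so that (ii) becomes a special case of (iii)) and the fact that $\Gs A$ is both closed and an ideal (so that the multiplier set produced in (iii) can be replaced by a genuine ideal). The real content of the proposition is thus that Proposition \ref{new} lets one pass freely between an arbitrary family of completely bounded multipliers and the closed ideal it generates without altering $\tilde{\frak N}$.
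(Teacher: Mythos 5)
Your proof is correct and uses essentially the same ingredients as the paper: the paper runs the cycle (i)$\Rightarrow$(ii)$\Rightarrow$(iii)$\Rightarrow$(i), closing it with Theorem \ref{thbimn} together with \cite[Theorem 4.3]{akt}, whereas you close at (ii) by invoking Proposition \ref{new} with $J=\Gs A$. Since Theorem \ref{thbimn} is itself just the combination of Proposition \ref{new}, Proposition \ref{prop2} and relation (\ref{eq_prean}), your reorganization is the same argument unpacked, and it is complete and correct.
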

\begin{proof}
 We observed the implication (i)$\Rightarrow$(ii) above, and  
(ii)$\Rightarrow$(iii) is trivial. 
 Finally, (iii)$\Rightarrow$(i) follows from 
Theorem \ref{thbimn} and \cite[Theorem 4.3]{akt}.
\end{proof}

{\remark 
It might also be observed that every  weak* closed  
jointly invariant subspace $\cl U$ is of the form 
$\cl U = \widetilde{\cl H}_{\Gs}$
for some $\Gs\subseteq M^{\cb}A(G)$.}

\bigskip

We end this section with a discussion on the ideals of the form $\Sigma A$:
If $J$ is a closed ideal of $A(G)$, then $J A\subseteq J$; thus, by (\ref{eq_prean}) 
and Proposition \ref{p_njan},
$J^\bot\subseteq \frak N(J)$ and therefore 
$\Bim (J^\bot)\subseteq\tilde{\frak N}(J)$, 
since $\tilde{\frak N}(J)$ is a $\cl D$-bimodule and contains ${\frak N}(J)$. 
The equality 
$J^\bot= \frak N(J)$ holds if and only if $J$ is 
generated by its compactly supported elements, equivalently if 
$J=JA$ (see Remark \ref{remideal}). Indeed, by 
Proposition \ref{p_njan} we have $\frak N(J)=\frak N(JA)= (JA)^\bot$ and so  
the equality $J^\bot= \frak N(J)$ is equivalent to $J^\bot= (JA)^\bot$.
Interestingly, the inclusion $\Bim (J^\bot)\subseteq\tilde{\frak N}(J)$ 
is in fact always an equality (Proposition \ref{prop2}). 

We do not know whether all closed ideals of $A(G)$ are of the form $\Sigma A$.
They certainly are when $A(G)$ satisfies {\em Ditkin's condition at infinity}
\cite[Remark 5.1.8 (2)]{kaniuth}, 
namely if 
every $u\in A(G)$ is the limit of a sequence $(uv_n)$, with $v_n\in A_c(G)$. 
Since $A_c(G)$ is dense in $A(G)$, this is equivalent to the condition that 
every $u\in A(G)$ belongs to the closed ideal $\overline{uA(G)}$.

This condition has been used before (see for example \cite{kl}). 
It certainly holds whenever $A(G)$ has a weak form of approximate identity;
for instance, when $G$ has the approximation property (AP) of 
Haagerup and Kraus \cite{hk} and a fortiori when $G$ is amenable. 
It also holds for all discrete groups.
See also the discussion in Remark 4.2 of \cite{lt} and the one
following Corollary 4.7 of \cite{akt}.

\section{Annihilators and Supports}\label{s}

In this section, given a set $\cl A$ of operators on $L^2(G)$,  we study the ideal of
all  $u\in A(G)$ which act trivially on $\cl A$; its zero set is the $G$-support 
of $\cl A$; we relate this to the $\go$-support of  $\cl A$ defined in \cite{eks}.
 
In \cite{eymard}, Eymard introduced, for $T\in\vn (G)$, the ideal $I_T$ of all
$u\in A(G)$ satisfying $u\cdot T=0$. We generalise this by defining, 
for a subset  $\cl A$ of $\cl B(L^2(G))$,  
\[
I_\cl{A} =\{u \in  A(G): S_{N(u)}(\cl A)=\{0\}. \}
\]
It is easy to verify that  $I_\cl{A}$ is a closed ideal of  $A(G)$.

Let $\cl U(\cl A)$ be the smallest weak* closed jointly invariant 
subspace containing $\cl A$. 
We next prove that $\cl U(\cl A)$ coincides with the set $\tilde{\frak N}(I_\cl{A})$ 
of all $T\in\cl B(L^2(G))$ satisfying $S_{N(u)}(T)=0$ for all $u \in I_\cl{A}$.  

\begin{proposition} \label{13} 
Let $\cl A\subseteq\cl B(L^2(G))$.
If $\gs\in  M^{\cb}A(G)$ then $S_{N(\gs)}(\cl A)=\{0\}$ if and only if  
$S_{N(\gs)}(\cl U(\cl A))=\{0\}$.
Thus, $I_\cl{A}=I_\cl{U(A)}$.
\end{proposition}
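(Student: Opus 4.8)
The plan is to prove the nontrivial direction by unpacking what it means for $S_{N(\gs)}$ to annihilate the \emph{generators} of $\cl U(\cl A)$, and then bootstrapping to the whole weak* closed jointly invariant space. One direction is immediate: since $\cl A\subseteq\cl U(\cl A)$, if $S_{N(\gs)}(\cl U(\cl A))=\{0\}$ then certainly $S_{N(\gs)}(\cl A)=\{0\}$. So the work is entirely in showing that $S_{N(\gs)}(\cl A)=\{0\}$ forces $S_{N(\gs)}$ to vanish on all of $\cl U(\cl A)$. Recall from the preliminaries that $\cl U(\cl A)$ is the weak* closed linear span of the set $\{S_w(\rho_r T\rho_r^*): T\in\cl A,\ w\in\frak S(G),\ r\in G\}$.

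The key step is therefore to verify that $S_{N(\gs)}$ annihilates each such generator $S_w(\rho_r T\rho_r^*)$, assuming $S_{N(\gs)}(T)=0$ for every $T\in\cl A$. I would handle the two operations separately. First, for the Schur-multiplier twist: since $N(\gs)$ is an \emph{invariant} Schur multiplier and the $S_w$ are exactly the weak* continuous $\cl D$-bimodule maps, the maps $S_{N(\gs)}$ and $S_w$ commute, because the multiplier algebra $\frak S(G)$ is commutative and composition of Schur-multiplier operators corresponds to multiplication of multipliers (i.e. $S_{N(\gs)}S_w=S_{N(\gs)w}=S_w S_{N(\gs)}$). Hence $S_{N(\gs)}(S_w(R))=S_w(S_{N(\gs)}(R))$, so if $S_{N(\gs)}(R)=0$ then $S_{N(\gs)}(S_w(R))=0$. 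Second, for the conjugation $R\mapsto\rho_r R\rho_r^*$: I would use the fact that $N(\gs)$ is invariant in the precise sense recalled before Theorem~\ref{th_satlcg}, namely $N(\gs)(sr,tr)=N(\gs)(s,t)$ marginally a.e. Conjugation by $\rho_r$ on $\cl B(L^2(G))$ corresponds, at the level of kernels in $T(G)$, to the coordinate translation $(s,t)\mapsto(sr,tr)$, and the invariance of $N(\gs)$ is exactly the statement that $S_{N(\gs)}$ intertwines this conjugation, i.e. $S_{N(\gs)}(\rho_r R\rho_r^*)=\rho_r\,S_{N(\gs)}(R)\,\rho_r^*$. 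Thus $S_{N(\gs)}(T)=0$ gives $S_{N(\gs)}(\rho_r T\rho_r^*)=0$.

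Combining these, $S_{N(\gs)}$ annihilates every generator $S_w(\rho_r T\rho_r^*)$. The final step is to pass from the generators to their weak* closed linear span: since $S_{N(\gs)}$ is weak* continuous and linear, its kernel is a weak* closed subspace, so vanishing on the generating set forces vanishing on $\cl U(\cl A)$. This proves $S_{N(\gs)}(\cl A)=\{0\}\Rightarrow S_{N(\gs)}(\cl U(\cl A))=\{0\}$, giving the asserted equivalence. The concluding identity $I_{\cl A}=I_{\cl U(\cl A)}$ is then a direct rewriting: by definition $u\in I_{\cl A}$ iff $S_{N(u)}(\cl A)=\{0\}$, which by the equivalence just proved (applied with $\gs=u\in A(G)\subseteq M^{\cb}A(G)$) is iff $S_{N(u)}(\cl U(\cl A))=\{0\}$, i.e.\ iff $u\in I_{\cl U(\cl A)}$.

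I expect the main obstacle to be the bookkeeping in the conjugation step: translating the operator identity $R\mapsto\rho_r R\rho_r^*$ into the kernel-level coordinate change $(s,t)\mapsto(sr,tr)$ and matching it against the stated invariance $w(sr,tr)=w(s,t)$ of $N(\gs)$, so that $S_{N(\gs)}$ genuinely commutes with conjugation. This requires care with the duality $\du{S_w(T)}{h}=\du{T}{wh}$, with how $\rho_r$ acts on the predual kernels, and with the ``marginally almost everywhere'' qualifiers; the commutation with $S_w$ is comparatively routine once the multiplier-composition formula is invoked.
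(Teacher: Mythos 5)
Your proof is correct and follows essentially the same route as the paper: the one nontrivial direction is handled by checking that $S_{N(\gs)}$ commutes with every $S_w$, $w\in\frak S(G)$ (via commutativity of multiplication in $\frak S(G)$), and with every conjugation ${\rm Ad}_{\rho_r}$ (via the right invariance of $N(\gs)$ acting on kernels in $T(G)$), and then invoking weak* continuity of $S_{N(\gs)}$ to pass from the generators to the weak* closed span. This matches the paper's argument, which establishes exactly these two commutation relations, the second by the same predual/kernel computation you outline.
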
 
\proof 
Recall that 
$$\cl U(\cl A) = 
\overline{\sspp\{S_w(\rho_r T \rho_r^*) : T\in \cl A, w\in \frak{S}(G), r\in G\}}^{w^*}.$$
The statement now follows immediately from the facts that 
$S_{N(\gs)}\circ S_w= S_w\circ S_{N(\gs)}$ for all $w\in \frak S(G)$
and $S_{N(\gs)}\circ {\rm Ad}_{\rho_r}= {\rm Ad}_{\rho_r}\circ S_{N(\gs)}$ 
for all $r\in G$. 
The first commutation relation is obvious, 
and the second one can be seen as follows:
Denoting by  $\theta_r$ the predual of the
map ${\rm Ad}_{\rho_r}$, for all $h\in T(G)$ we have  
$\theta_r(N(\gs)h) = N(\gs)\theta_r(h)$ since  $N(\gs)$ is right invariant and so
\begin{align*}
\du{S_{N(\gs)}(\rho_rT\rho_r^*)}{h} &= \du{\rho_rT\rho_r^*}{N(\gs)h}
 = \du{T}{\theta_r(N(\gs)h)} \\
& = \du{T}{N(\gs)\theta_r(h)} = \du{S_{N(\gs)}(T)}{\theta_r(h)} \\
&= \du{\rho_r(S_{N(\gs)}(T))\rho_r^*}{h}.
\end{align*}
Thus $S_{N(\gs)}(\rho_rT\rho_r^*)=\rho_r(S_{N(\gs)}(T))\rho_r^*$. 
\qed

\begin{theorem} \label{prop16} 
Let $\cl A\subseteq\cl B(L^2(G))$.  The bimodule 
$\tilde{\frak N}(I_\cl{A})$ coincides with the smallest weak* closed jointly 
invariant subspace $\cl U(\cl A)$ of $\cl B(L^2(G))$ containing $\cl A$. 
\end{theorem}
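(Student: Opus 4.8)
The plan is to prove the two inclusions $\cl U(\cl A)\subseteq\tilde{\frak N}(I_\cl{A})$ and $\tilde{\frak N}(I_\cl{A})\subseteq\cl U(\cl A)$ separately, leaning heavily on the machinery already assembled in the excerpt. For the forward inclusion, the key observation is that $\tilde{\frak N}(I_\cl{A})$ is itself jointly invariant: indeed, by Proposition \ref{prop2} we have $\tilde{\frak N}(J)=\Bim(J^\perp)$ for any closed ideal $J$, and $\Bim(J^\perp)$ is a weak* closed $\cl D$-bimodule which, by \cite[Theorem 4.3]{akt}, is jointly invariant. Since $I_\cl{A}$ is a closed ideal, $\tilde{\frak N}(I_\cl{A})$ is a weak* closed jointly invariant subspace. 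By the very definition of $I_\cl{A}$ we have $S_{N(u)}(T)=0$ for all $T\in\cl A$ and all $u\in I_\cl{A}$, so $\cl A\subseteq\tilde{\frak N}(I_\cl{A})$. As $\cl U(\cl A)$ is the \emph{smallest} weak* closed jointly invariant subspace containing $\cl A$, the inclusion $\cl U(\cl A)\subseteq\tilde{\frak N}(I_\cl{A})$ follows at once.

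For the reverse inclusion, I would first apply Proposition \ref{13} to replace $I_\cl{A}$ by $I_\cl{U(A)}$, which equals $I_\cl{A}$; the real content is that we may work with the jointly invariant space $\cl U(\cl A)$ rather than the bare set $\cl A$. Writing $\cl U=\cl U(\cl A)$ and $J=I_\cl{U}=I_\cl{A}$, the goal becomes $\tilde{\frak N}(J)\subseteq\cl U$. Because $\cl U$ is weak* closed and jointly invariant, \cite[Theorem 4.3]{akt} (equivalently Proposition \ref{th_eqc}) tells us that $\cl U=\tilde{\frak N}(J')$ for \emph{some} closed ideal $J'\subseteq A(G)$. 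The strategy is then to show $J'\supseteq J=I_\cl{U}$, since $\tilde{\frak N}$ is inclusion-reversing in the ideal and this would give $\tilde{\frak N}(J)\subseteq\tilde{\frak N}(J')=\cl U$.

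To identify $J'$, I would unwind the definition of $I_\cl{U}$: an element $u\in A(G)$ lies in $I_\cl{U}$ precisely when $S_{N(u)}(T)=0$ for every $T\in\cl U=\tilde{\frak N}(J')$. For the ideal $J'$ one has, by the definition of $\tilde{\frak N}(J')$ via Proposition \ref{prop2} and Theorem \ref{th_satlcg}, that every $u\in J'$ annihilates $\cl U$ through $S_{N(u)}$, so $J'\subseteq I_\cl{U}=J$; the crux is the reverse containment $J\subseteq J'$. Here I expect to use that $J'$ can be taken to be exactly the ideal of all $u$ with $S_{N(u)}$ vanishing on $\cl U$ — in other words, one should check that the ideal furnished by \cite[Theorem 4.3]{akt} is canonically $I_\cl{U}$ itself, so that $J'=I_\cl{U}=J$ and the two $\tilde{\frak N}$ spaces coincide.

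The main obstacle is precisely this identification step: verifying that the ideal $J'$ produced abstractly by \cite[Theorem 4.3]{akt} coincides with the concretely defined ideal $I_\cl{U}$, rather than merely being contained in it. I would resolve this by arguing that both $J$ and $J'$ have the form $\Sigma A$ (ideals generated by their compactly supported elements, see Remark \ref{remideal}), or alternatively by a direct duality computation: using $\du{S_{N(u)}(T)}{h}=\du{T}{N(u)h}$, the condition $u\in I_\cl{U}$ says $\du{T}{N(u)h}=0$ for all $T\in\cl U$ and $h\in T(G)$, i.e. $N(u)h\in\cl U_\perp$ (the predual annihilator) for all $h$; comparing this with the saturation description $\Sat(J')$ of the pre-annihilator of $\cl U$ from Theorem \ref{th_satlcg} should force $u\in J'$. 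This matching of the two annihilator descriptions is the step that requires care, but it is routine given the identity $\Sat(J)^\perp=\Bim(J^\perp)$ already available to us.
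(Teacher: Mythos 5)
Your first inclusion, $\cl U(\cl A)\subseteq\tilde{\frak N}(I_\cl{A})$, is correct and is essentially the closing step of the paper's own proof. The problem is in the reverse inclusion, and it is an error of direction: you invoke the fact that $J\mapsto\tilde{\frak N}(J)$ is inclusion-reversing, but then apply it as if it were inclusion-preserving. Since a larger ideal imposes more constraints, $J\subseteq J'$ yields $\tilde{\frak N}(J')\subseteq\tilde{\frak N}(J)$, not the other way around; so the ``crux'' you set out to prove, $J=I_\cl{U}\subseteq J'$, would only give $\cl U=\tilde{\frak N}(J')\subseteq\tilde{\frak N}(I_\cl{U})$, which is the inclusion you already established, not the one you need. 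What the reverse inclusion actually requires is the opposite containment $J'\subseteq I_\cl{U}$ --- and that is exactly the statement you prove in passing (every $u\in J'$ kills $\cl U=\tilde{\frak N}(J')$ through $S_{N(u)}$, by the very definition of $\tilde{\frak N}(J')$) and then discard as not being the real content. Applied with the correct monotonicity it finishes the proof at once: $J'\subseteq I_\cl{U}$ gives
$\tilde{\frak N}(I_\cl{A})=\tilde{\frak N}(I_\cl{U})\subseteq\tilde{\frak N}(J')=\cl U(\cl A)$,
using Proposition \ref{13} for the first equality. So a complete and correct proof is hidden inside what you wrote; the mistake is in the assembly, and everything from ``the crux is the reverse containment'' onward should simply be deleted.

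Those final paragraphs are, moreover, not salvageable as written. The statement of \cite[Theorem 4.3]{akt} quoted in the paper is purely existential, and the ideal $J'$ with $\cl U=\Bim(J'^\perp)=\tilde{\frak N}(J')$ is in general not unique: $\tilde{\frak N}(J')=\tilde{\frak N}(J'A)$ by Proposition \ref{new}, and the paper explicitly says it is unknown whether $J'A=J'$ always holds. Hence there is no ``canonical'' identification of $J'$ with $I_\cl{U}$ to check; declaring that $J'$ ``can be taken to be'' $I_\cl{U}$ is circular, because $\cl U=\tilde{\frak N}(I_\cl{U})$ is precisely the theorem being proved. Nor does the duality computation close the gap: from $N(u)h\in\cl U_\perp=\Sat(J')$ for all $h\in T(G)$ one can only conclude that $u$ lies in the largest closed ideal whose saturation is $\Sat(J')$, not that $u\in J'$. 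The paper sidesteps all of this by working with the \emph{concrete} ideal from the proof of \cite[Theorem 4.3]{akt}, namely $J=\{u\in A(G): N(u)\chi_{L\times L}\in(\cl U(\cl A))_\perp \text{ for all compact } L\subseteq G\}$, and showing by a direct pairing of $S_{N(u)}(T)$ against the functions $w\chi_{L\times L}$, $w\in\frak S(G)$, that this $J$ is contained in $I_\cl{A}$ --- that is, exactly the containment in the useful direction.
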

\proof 
Since  $\cl{U(A)}$ is weak* closed and jointly invariant, by \cite[Theorem 4.3]{akt} it 
equals $\Bim(J^\bot)$, 
where $J$ is the closed ideal of $A(G)$ given by
\[J=\{u\in A(G): N(u)\chi_{L\times L} \in (\cl{U(A)})_\bot
\;\text{for all  compact $L\subseteq G$}\}.\]

We show that $J\subseteq I_\cl{A}$. Suppose $u\in J$;  then, 
for all $w\in\frak S(G)$ and all $T\in\cl A$, since $S_w(T)$ is in $\cl{U(A)}$, 
by Theorem \ref{th_satlcg} 
it annihilates $ N(u)\chi_{L\times L}$ for every  compact $L\subseteq G$.
It follows that 
$$\du{S_{N(u)}(T)}{w\chi_{L\times L}} = \du{T}{N(u)w\chi_{L\times L}} =\du{S_w(T)}{N(u)\chi_{L\times L}} = 0$$
for all  $w\in\frak S(G)$ and all compact $L\subseteq G$. 
Taking $w=f\otimes\bar g$
with $f,g\in L^\infty(G)$  supported in $L$, this yields
\[
\sca{S_{N(u)}(T)f,g} =
\du{S_{N(u)}(T)}{w\chi_{L\times L}} = 0
\]
for all  compactly supported $f,g\in L^\infty(G)$ and therefore $S_{N(u)}(T)=0$.
Since this holds for all $T\in\cl A$, we have shown that $u\in I_\cl{A}$.

\smallskip It follows that $\cl{U(A)}=\Bim(J^\perp)\supseteq \Bim(I_\cl{A}^\perp)$.
But $\Bim(I_\cl{A}^\perp)=\tilde{\frak N}(I_\cl{A})$ by Proposition \ref{prop2}, 
and this space is clearly jointly invariant and weak* closed. 
Since it contains $\cl A$, it also contains $\cl{U(A)}$ and so
\[\cl{U(A)}=\Bim(J^\perp)= \Bim(I_\cl{A}^\perp)=\tilde{\frak N}(I_\cl{A}). \qquad\Box\]

\bigskip

\noindent\textbf{Supports of functionals and operators}                
In \cite{neurun}, the authors generalise the notion of support of an element of 
 $\vn(G)$ introduced by Eymard \cite{eymard} by defining, for an arbitrary  
$T\in\cl B(L^2(G))$, 
\[ \suppG T := 
\{x\in G : u(x) = 0 \;\text{for all  $u\in A(G)$ with }\; S_{N(u)}(T) = 0\}.\]
Notice that $\suppG T$ coincides with the zero set of the ideal $I_T$ (see also 
\cite[Proposition 3.3]{neurun}). More generally, let us define the {\em $G$-support}
of a subset $\cl A$ of $\cl B(L^2(G))$ by 
\[\suppG(\cl A) = Z(I_\cl{A}).\]
When $\cl A\subseteq \vn(G)$, then $\suppG(\cl A)$ is just the support 
of $\cl A$ considered as a set of functionals on $A(G)$ as in \cite{eymard}.

The following is proved in \cite{neurun} under the assumption 
that $G$ has the approximation property of Haagerup and Kraus \cite{hk}:

\begin{proposition}
Let $T\in\cl B(L^2(G))$. Then $\suppG(T)=\emptyset$ if and only if $T=0$.
\end{proposition}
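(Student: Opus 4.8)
The plan is to prove the two implications separately, with the converse being the substantive one. The easy direction is $T=0\Rightarrow\suppG(T)=\emptyset$: if $T=0$ then $S_{N(u)}(T)=0$ for every $u\in A(G)$, so $I_T=A(G)$, and since the functions in the regular algebra $A(G)$ have no common zero, $\suppG(T)=Z(I_T)=Z(A(G))=\emptyset$. For the converse, assume $\suppG(T)=Z(I_T)=\emptyset$. The whole argument then rests on two facts: (a) a closed ideal of $A(G)$ with empty zero set must be all of $A(G)$, so that $I_T=A(G)$; and (b) if $S_{N(u)}(T)=0$ for every $u\in A(G)$, then $T=0$. Granting (a), we have $I_T=A(G)$, which by the very definition of $I_T$ says $S_{N(u)}(T)=0$ for all $u\in A(G)$, and (b) then yields $T=0$.

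Fact (b) is the quick step. For $u\in A(G)$ and $h\in T(G)$ one has $\du{S_{N(u)}(T)}{h}=\du{T}{N(u)h}$, so the hypothesis means that $T$ annihilates every element of the form $N(u)h$. As already observed in the proof of Proposition \ref{new}, the linear span of $\{N(u)h:u\in A(G),\,h\in T(G)\}$ is dense in the predual $T(G)$; hence $T=0$. (Alternatively, once $I_T=A(G)$ is known, one may invoke Proposition \ref{prop2} directly: $T\in\tilde{\frak N}(I_T)=\tilde{\frak N}(A(G))=\Bim(A(G)^\bot)=\Bim(\{0\})=\{0\}$, since the annihilator of $A(G)$ in $\vn(G)$ is trivial.)

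Fact (a) is where the work lies, and I expect it to be the main obstacle. It is the statement that the empty set is a set of spectral synthesis for $A(G)$, which holds because $A(G)$ is a regular, Tauberian, commutative Banach algebra for every locally compact $G$: recall that $A_c(G)$ is dense in $A(G)$ and that the elements of $A_c(G)$ have compactly supported Gelfand transforms. Concretely, to prove $I_T=A(G)$ it suffices, by density of $A_c(G)$ and closedness of $I_T$, to show $v\in I_T$ for each $v\in A_c(G)$. Writing $K=\supp v$, I would use $Z(I_T)=\emptyset$ to choose, for each $x\in K$, some $u_x\in I_T$ with $u_x(x)\neq0$; replacing $u_x$ by $u_x\overline{u_x}\in I_T$ and summing over a finite subcover of $K$ produces $p\in I_T$ with $p\ge0$ and $p>0$ on a neighbourhood of $K$. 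The delicate point is then to invert $p$ locally: using regularity of $A(G)$ one obtains $g\in A(G)$ with $pg=1$ on $K$, whence $v=(vg)p\in I_T$ because $p\in I_T$ and $vg\in A(G)$. This gives $A_c(G)\subseteq I_T$, and therefore $I_T=A(G)$.

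The main difficulty, as indicated, is the passage from $p$ to its local inverse $g$, which is exactly the regularity/local-invertibility property of the Fourier algebra; everything else is formal. It is worth noting that this route uses only that $A(G)$ is regular and Tauberian, properties valid for all $G$, which is why the approximation property assumption of \cite{neurun} can be dispensed with. If one prefers, fact (a) may simply be quoted as the Tauberian theorem for regular commutative Banach algebras (so that an ideal with empty hull equals the algebra), bypassing the explicit construction above.
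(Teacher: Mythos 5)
Your proposal is correct and follows essentially the same route as the paper: the forward direction is immediate, and for the converse the paper likewise deduces $I_T=A(G)$ from $Z(I_T)=\emptyset$ (citing Eymard's Corollaire 3.38, i.e.\ your fact (a), which the paper quotes rather than re-proves as you sketch) and then concludes $T=0$ exactly via your fact (b), the density of $\sspp\{N(u)h: u\in A(G),\, h\in T(G)\}$ in $T(G)$. The only difference is presentational: the paper treats the Tauberian-type statement as a known property of $A(G)$, whereas you unfold a proof of it.
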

\proof It is clear that the empty set is the $G$-support of the zero operator.
 Conversely, 
suppose $\suppG(T)=\emptyset$, that is, $Z(I_T)=\emptyset$. This implies
that $I_T=A(G)$ (see \cite[Corollary 3.38]{eymard}). 
Hence $S_{N(u)}(T)=0$ for all $u\in A(G)$, and so for all $h\in T(G)$ we have
\[
\du{T}{N(u)h}= \du{S_{N(u)}(T)}{h}=0.
\]
Since  the linear span of $\{N(u)h:u\in A(G), h\in T(G)\}$ is dense in $T(G)$, 
it follows that $T=0.$
\qed 

\begin{proposition}\label{propsame}
The $G$-support of a subset $\cl A\subseteq\cl B(L^2(G))$ is the same as 
 the $G$-support of the smallest weak* closed jointly invariant subspace $\cl{U(A})$ 
containing $\cl A$.
\end{proposition}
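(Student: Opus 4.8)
The plan is to observe that this proposition is an essentially immediate consequence of Proposition \ref{13}, so the work consists entirely of unwinding the definition of the $G$-support and invoking the equality of ideals established there.

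First I would recall that, by the definition just given, the $G$-support is defined as the zero set of the associated ideal: namely $\suppG(\cl A) = Z(I_\cl{A})$ and likewise $\suppG(\cl U(\cl A)) = Z(I_{\cl U(\cl A)})$. Thus the claim
\[
\suppG(\cl A) = \suppG(\cl U(\cl A))
\]
is equivalent to the equality of zero sets $Z(I_\cl{A}) = Z(I_{\cl U(\cl A)})$.

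Next I would invoke Proposition \ref{13}, whose conclusion is precisely that $I_\cl{A} = I_{\cl U(\cl A)}$. Since the two ideals coincide, their zero sets coincide trivially, and the desired equality of supports follows at once. Concretely, combining these observations gives
\[
\suppG(\cl A) = Z(I_\cl{A}) = Z(I_{\cl U(\cl A)}) = \suppG(\cl U(\cl A)),
\]
which completes the argument.

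I do not expect any genuine obstacle here: all the substantive content is already contained in Proposition \ref{13}, where the two commutation relations $S_{N(\gs)}\circ S_w = S_w\circ S_{N(\gs)}$ and $S_{N(\gs)}\circ {\rm Ad}_{\rho_r} = {\rm Ad}_{\rho_r}\circ S_{N(\gs)}$ were used to show that a multiplier $\gs$ annihilates $\cl A$ if and only if it annihilates $\cl U(\cl A)$. The present proposition simply records the consequence of that fact at the level of supports, so the only thing to be careful about is that the definitions of $\suppG$ for a set and for the generated jointly invariant subspace are applied to the same ideal.
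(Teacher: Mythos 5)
Your proposal is correct and is exactly the paper's own argument: the paper's proof is the single line that since $I_{\cl A}=I_{\cl U(\cl A)}$ by Proposition~\ref{13}, the claim is immediate, because $\suppG$ is by definition the zero set of the corresponding ideal. Your write-up simply makes this unwinding explicit, so there is nothing to add or correct.
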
 
\proof Since $I_\cl{A}=I_{\cl{U(A})}$ (Proposition \ref{13}), this is immediate. \qed

\medskip

The following proposition shows that the $G$-support of a subset 
$\cl A\subseteq\cl B(L^2(G))$ is in fact the support of a space of linear
functionals on $A(G)$ (as used by Eymard): it  
can be obtained either by first forming 
the ideal $I_\cl{A}$ of all $u\in A(G)$  `annihilating' $\cl A$ 
(in the sense  that $S_{N(u)}(\cl A)=\{0\}$)
and then taking the support of the annihilator of $I_\cl{A}$ in $\vn(G)$; alternatively,
it can be obtained by forming the smallest weak* closed jointly invariant subspace
 $\cl{U(A})$
containing $\cl A$ and then considering the support of the set of 
all  the functionals on $A(G)$ which are contained in $\cl{U(A})$.

\begin{proposition}\label{propsame2}
The $G$-support of a subset $\cl A\subseteq\cl B(L^2(G))$ coincides with the supports
of the following  spaces of functionals on $A(G)$:

(i) \ the space $I_\cl{A}^\bot\subseteq\vn(G)$  

(ii) the space $\cl{U(A})\cap\vn(G)=\frak N(I_\cl{A})$. 
\end{proposition}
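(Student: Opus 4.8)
The plan is to reduce both equalities to a single statement about annihilators of ideals: for every closed ideal $K\subseteq A(G)$, the $G$-support of the space $K^\bot\subseteq\vn(G)$ (viewed as functionals on $A(G)$) equals $Z(K)$. Granting this, part (i) is the instance $K=I_{\cl A}$, which gives $\suppG(I_{\cl A}^\bot)=Z(I_{\cl A})=\suppG(\cl A)$ directly from the definition of the $G$-support. For part (ii) I would first record the identity $\cl U(\cl A)\cap\vn(G)=\frak N(I_{\cl A})$: by Theorem \ref{prop16} we have $\cl U(\cl A)=\tilde{\frak N}(I_{\cl A})$, and intersecting with $\vn(G)$ and using the relation $\frak N(\Sigma)=\tilde{\frak N}(\Sigma)\cap\vn(G)$ (with $\Sigma=I_{\cl A}$) yields the claim. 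Relation (\ref{eq_prean}) then gives $\frak N(I_{\cl A})=(I_{\cl A}A)^\bot$, so part (ii) is the instance $K=I_{\cl A}A$; since the ideals $I_{\cl A}A$ and $I_{\cl A}$ have the same zero set, this again produces $\suppG(\cl A)$.

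It remains to prove the reduction, which I would do by a two-sided inclusion of ideals inside $A(G)$. For $Z(I_{K^\bot})\subseteq Z(K)$ I would show $K\subseteq I_{K^\bot}$: if $u\in K$ and $T\in K^\bot$, then for every $v\in A(G)$ the duality gives $\du{u\cdot T}{v}=\du{T}{uv}=0$, since $uv\in K$; hence $u\cdot T=S_{N(u)}(T)=0$, so $u\in I_{K^\bot}$. Passing to zero sets (which reverses inclusions) gives the inclusion. For the reverse, $Z(K)\subseteq Z(I_{K^\bot})$, I would show that every $u\in I_{K^\bot}$ vanishes on $Z(K)$. Indeed, $u\cdot T=0$ for all $T\in K^\bot$ means $\du{T}{uv}=0$ for all such $T$ and all $v\in A(G)$, so by the bipolar theorem ($K$ being norm-closed, $(K^\bot)_\bot=K$) we get $uv\in K$ for every $v$. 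Fixing $x\in Z(K)$ and choosing, by regularity of $A(G)$ (\cite[(3.2) Lemme]{eymard}), a function $v$ with $v(x)=1$, we obtain $u(x)=(uv)(x)=0$. Hence $I_{K^\bot}$ is contained in the ideal of functions vanishing on $Z(K)$, and taking zero sets yields $Z(K)\subseteq Z(I_{K^\bot})$.

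The calculation is essentially routine once the reduction is in place; the only delicate point is the second inclusion, where one converts the operator-level identity $u\cdot T=0$ into pointwise vanishing of $u$ on $Z(K)$. The mechanism is the interplay of the duality $\du{u\cdot T}{v}=\du{T}{uv}$, the bipolar theorem, and the regularity of the Fourier algebra, and I would take particular care to state this step precisely. Notably, no approximation-property or spectral-synthesis hypothesis on $G$ is required, since the argument uses only that $K$ is norm-closed together with the pointwise structure of $A(G)$.
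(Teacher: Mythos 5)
Your proof is correct, but it takes a genuinely different route from the paper's for the support computation. The paper derives both parts from Proposition \ref{propsame}: using Theorem \ref{prop16} and Proposition \ref{prop2} (the latter resting on the deep Theorem \ref{th_satlcg} from \cite{akt}), it shows that $\cl A$, $I_{\cl A}^\bot$ and $\frak N(I_\cl{A})$ all generate the \emph{same} weak* closed jointly invariant subspace, namely $\cl{U(A)}=\tilde{\frak N}(I_\cl{A})=\Bim(I_\cl{A}^\bot)$, and then invokes the fact that the $G$-support depends only on that generated subspace. You instead isolate an elementary hull--kernel lemma --- for any norm-closed ideal $K\subseteq A(G)$ one has $Z(I_{K^\bot})=Z(K)$ --- proved entirely inside the $A(G)$--$\vn(G)$ duality via the bipolar theorem $(K^\bot)_\bot=K$ and the regularity of $A(G)$; both inclusions in your argument are sound (in particular the conversion of $u\cdot T=0$ for all $T\in K^\bot$ into $uA(G)\subseteq K$ and thence pointwise vanishing on $Z(K)$ is handled correctly). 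This makes your part (i) completely independent of the Schur-multiplier/bimodule machinery, which the paper's proof of (i) does use. For part (ii) you still need Theorem \ref{prop16} to identify $\cl{U(A)}\cap\vn(G)=\frak N(I_\cl{A})$ --- exactly as the paper does --- and then you reduce via relation (\ref{eq_prean}) and the identity $Z(I_\cl{A}A)=Z(I_\cl{A})$ (verified in the paper within Corollary \ref{c_nss}), rather than via the paper's observation that $\cl U(\frak N(I_\cl{A}))=\cl{U(A)}$. What the paper's approach buys is the structural insight that all three sets are \lq\lq the same\rq\rq{} from the point of view of jointly invariant subspaces; what yours buys is a lighter logical footprint and a reusable lemma of independent interest, valid with no synthesis or approximation hypotheses (as you note, and as is also true of the paper's argument).
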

\proof 
By Proposition \ref{prop2} and Theorem \ref{prop16},
\[\cl{U(A})= \tilde{\frak N}(I_\cl{A})=\Bim( I_\cl{A}^\bot).\]
Since the $\cl D$-bimodule $\Bim( I_\cl{A}^\bot)$ is jointly invariant, 
it coincides with $\cl U(I_\cl{A}^\bot)$. 
Thus $\cl{U(A})=\cl U(I_\cl{A}^\bot)$ and so Proposition \ref{propsame} gives  
$\suppG(\cl A)=\suppG( I_\cl{A}^\bot)$, proving part (i).

Note that                     
$\cl U(\frak N(I_\cl{A}))=\Bim(\frak N(I_\cl{A}))=\tilde{\frak N}(I_\cl{A})$ and so
$\cl U(\frak N(I_\cl{A}))=\cl{U(A})$. Thus by Proposition \ref{propsame}, 
$\frak N(I_\cl{A})$ and $\cl A$ 
have the same support.  Since 
$\cl{U(A})\cap\vn(G)=\tilde{\frak N}(I_\cl{A})\cap\vn(G)=\frak N(I_\cl{A})$,
part (ii) follows. \qed

\bigskip

We are now in a position to relate the $G$-support of a set of operators to their
$\omega$-support as introduced in \cite{eks}.

\begin{theorem}\label{312} 
Let $\cl U\subseteq\cl B(L^2(G))$ be a weak* closed jointly invariant subspace. Then 
\begin{align*}
\suppo(\cl U) &\cong (\suppG(\cl U))^*.
\end{align*}
In particular, the $\omega$-support of a jointly invariant subspace is marginally
equivalent to a topologically closed set. 
\end{theorem}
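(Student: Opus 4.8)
The plan is to combine the earlier identification of $\cl U$ with a saturated ideal and then analyse the preannihilator $\Sat J$ directly inside $T(G)$. Since $\cl U$ is weak* closed and jointly invariant, Theorem \ref{prop16} gives $\cl U=\tilde{\frak N}(I_{\cl U})$; writing $J=I_{\cl U}$ and $E=Z(J)=\suppG(\cl U)$, Proposition \ref{prop2} together with Theorem \ref{th_satlcg} yields $\cl U=\Bim(J^\perp)=(\Sat J)^\perp$, so the preannihilator is exactly $\cl U_\perp=\Sat J$. The whole theorem will follow once I show that $\Sat J$ equals $\Psi:=\{h\in T(G):h=0\text{ m.a.e. on }E^*\}$. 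Indeed, $\Psi$ is precisely the preannihilator of the maximal bimodule $\frak M_{\max}(E^*)$ (the closed span of kernels supported off $E^*$; see \cite{eks}), so $\Sat J=\Psi$ forces $\cl U=\frak M_{\max}(E^*)$, whence $\suppo(\cl U)=\suppo(\frak M_{\max}(E^*))\cong E^*$. Since $E=Z(J)$ is closed in $G$, the set $E^*=\{(s,t):ts^{-1}\in E\}$ is closed in $G\times G$, hence $\omega$-closed by second countability; this also gives the final assertion.

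One inclusion is immediate. Each generator $N(u)\chi_{L\times L}$ of $\Sat J$ has $u\in J$, so $u$ vanishes on the common zero set $E=Z(J)$; as $N(u)(s,t)=u(ts^{-1})$, this kernel vanishes on $E^*$, and vanishing on $E^*$ is preserved by the $L^\infty(G)$-bimodule operations and by norm limits. Hence $\Sat J\subseteq\Psi$.

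The substance of the argument is the reverse inclusion $\Psi\subseteq\Sat J$, a synthesis-type statement and the step I expect to be the main obstacle; I would prove it by localisation using the $L^\infty(G)$-bimodule structure of $\Sat J$. First I reduce to kernels supported on a single relatively compact Borel rectangle $A\times B$ disjoint from $E^*$, that is, with $\overline{BA^{-1}}\subseteq G\setminus E$: any $h\in\Psi$ is supported (marginally) on the $\omega$-open set $(E^*)^c$, which by $\sigma$-compactness can be partitioned into countably many such rectangles, the corresponding pieces of $h$ summing to $h$ in the norm of $T(G)$. For a fixed such rectangle, set $K:=\overline{BA^{-1}}$; since $K$ is compact and disjoint from the closed set $E$, regularity of $A(G)$ provides $u\in A(G)$ equal to $1$ on $K$ with compact support inside $G\setminus E$. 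Such a $u$ vanishes on a neighbourhood of $E$ and has compact support, so it belongs to the smallest closed ideal with hull $E$, which is contained in $J$ (see \cite[Section 5.1]{kaniuth}); thus $u\in J$. Because $N(u)(s,t)=u(ts^{-1})=1$ for $(s,t)\in A\times B$, applying the bimodule action of $\chi_A$ and $\chi_B$ to the generator $N(u)\chi_{L\times L}\in\Sat J$ (with $L\supseteq A\cup B$) produces exactly $\chi_{A\times B}$, so $\chi_{A\times B}\in\Sat J$. As $\Sat J$ is a closed $L^\infty(G)$-bimodule, it then contains the entire closed bimodule generated by $\chi_{A\times B}$, namely every kernel supported on $A\times B$; summing over the rectangles gives $h\in\Sat J$.

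The delicate points to check carefully are the membership $u\in J$ — which rests on the fact, valid for the regular Tauberian algebra $A(G)$, that a compactly supported function vanishing near the hull of a closed ideal lies in that ideal — and the measurable choice of a countable rectangle partition of $(E^*)^c$ with control of the $T(G)$-norms of the pieces; both are routine once the scheme above is in place. Combining the two inclusions gives $\Sat J=\Psi$, hence $\cl U=\frak M_{\max}(E^*)$ and $\suppo(\cl U)\cong E^*=(\suppG(\cl U))^*$, with $E^*$ topologically closed.
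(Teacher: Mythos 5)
Your opening reduction is exactly the paper's: with $J=I_{\cl U}$ and $E=Z(J)=\suppG(\cl U)$, Theorem \ref{prop16}, Proposition \ref{prop2} and Theorem \ref{th_satlcg} give $\cl U=\Bim(J^\perp)=(\Sat J)^\perp$. But from this point the paper needs only a \emph{support} computation, quoted from \cite[Section 5]{akt}: $\suppo\bigl((\Sat J)^\perp\bigr)=\nul(\Sat J)$ and $\nul(\Sat J)\cong Z(J)^*$, where $\nul(\Sat J)$ is the largest $\omega$-closed set on which every element of $\Sat J$ vanishes. You instead try to prove the much stronger \emph{space} identification $\Sat J=\Psi:=\{h\in T(G): h=0 \text{ m.a.e.\ on } E^*\}$, equivalently $\cl U=\frak M_{\max}(E^*)$. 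That is an operator-synthesis assertion and it is false in general: it is precisely the conclusion of Corollary \ref{c_nss}, which the paper can draw only under the hypothesis that the zero set satisfies spectral synthesis. Indeed, your argument for $\Psi\subseteq\Sat J$ uses nothing about $J$ beyond $Z(J)=E$ and $j(E)\subseteq J$ (where $j(E)$ denotes the set of compactly supported elements of $A(G)$ vanishing on a neighbourhood of $E$), so, were it correct, applying it to the two ideals $\overline{j(E)}$ and $I(E)=\{u\in A(G):u|_E=0\}$ would give $\tilde{\frak N}(\overline{j(E)})=(\Sat \overline{j(E)})^\perp=\Psi^\perp=(\Sat I(E))^\perp=\tilde{\frak N}(I(E))$ for \emph{every} closed $E\subseteq G$. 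Intersecting with $\vn(G)$ and using (\ref{eq_prean}), Proposition \ref{p_njan} and Remark \ref{remideal}, this yields $(\overline{j(E)})^\perp=(I(E)A)^\perp$, hence $\overline{j(E)}=I(E)A$; when $G$ is amenable, $A(G)$ has a bounded approximate identity, so $I(E)\subseteq I(E)A$ and therefore $I(E)=\overline{j(E)}$, i.e.\ every closed subset of $G$ would be a set of spectral synthesis. For $G=\bb R^3$ and $E$ the unit sphere this contradicts Schwartz's classical example (Malliavin's theorem).

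The step at which this goes wrong is the one you dismiss as routine: that for $h\in\Psi$ the pieces $h\chi_{R_n}$, where $(E^*)^c\cong\bigcup_n R_n$ is a partition into pairwise disjoint Borel rectangles, sum to $h$ \emph{in the norm of} $T(G)$. They sum to $h$ only marginally almost everywhere. The $N$-th partial sum is $h\chi_{R_1\cup\cdots\cup R_N}$, and the Schur multiplier norms $\nor{\chi_{R_1\cup\cdots\cup R_N}}_{\frak S(G)}$ are in general unbounded as $N\to\infty$ (the same phenomenon as the unboundedness of triangular truncation), so marginal a.e.\ convergence cannot be upgraded to norm convergence; whether some decomposition converges in norm for every $h\in\Psi$ is not a technical lemma but, combined with the remaining (correct) steps of your argument, the full strength of the false claim $\Psi\subseteq\Sat J$. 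The sound parts of your proof --- the inclusion $\Sat J\subseteq\Psi$, the membership $u\in\overline{j(E)}\subseteq J$ via regularity, and $\chi_{A\times B}\in\Sat J$ --- cannot be assembled into a proof along these lines; one must argue, as the paper does, at the level of null sets ($\nul(\Sat J)\cong Z(J)^*$), which requires knowing only \emph{where} all elements of $\Sat J$ vanish, not \emph{which} functions vanishing there belong to $\Sat J$.
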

\proof 
Let $J = I_\cl{U}$. By definition, $\suppG(\cl U) = Z(J)$. By 
the proof of Theorem \ref{prop16},
$\cl U = \Bim(J^\bot)$, and hence, by Theorem \ref{th_satlcg}, $\cl U = (\Sat J)^\bot$.
By \cite[Section 5]{akt}, 
$\suppo(\cl U)=\nul(\Sat J)=(Z(J))^*$, where 
$\nul (\Sat J)$ is the largest, up to marginal equivalence, 
$\omega$-closed subset $F$ of $G\times G$
such that $h|_F = 0$ for all $h\in\Sat J$ (see \cite{st1}).  
The proof is complete.
\qed

\begin{corollary}\label{c_nss}
Let $\Gs\subseteq M^{\cb}A(G)$. Then  
\[
\suppo \tilde{\frak{N}}(\Gs) \cong Z(\Gs)^*.
\]
If  $Z(\Sigma)$ satisfies 
spectral synthesis, then $\tilde{\frak{N}}(\Gs) = \frak{M}_{\max}(Z(\Sigma)^*)$. 
\end{corollary}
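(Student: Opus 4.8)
The plan is to reduce everything to the closed ideal $J=\Gs A$ and then feed it into the support machinery of Theorem \ref{312}. First I would record that $\tilde{\frak N}(\Gs)=\tilde{\frak N}(\Gs A)=\tilde{\frak N}(J)$ by Proposition \ref{new}, that this equals $\Bim(J^\bot)=(\Sat J)^\bot$ by Proposition \ref{prop2}, and that it is a weak* closed jointly invariant subspace by Proposition \ref{th_eqc}. Also $Z(\Gs)=Z(J)$.

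For the first (support) assertion I would apply Theorem \ref{312} to $\cl U=\tilde{\frak N}(\Gs)$, which yields $\suppo(\tilde{\frak N}(\Gs))\cong(\suppG(\tilde{\frak N}(\Gs)))^*$; it then remains to identify the $G$-support $\suppG(\tilde{\frak N}(\Gs))=Z(I_{\tilde{\frak N}(\Gs)})$ with $Z(\Gs)$. The inclusion $Z(I_{\tilde{\frak N}(\Gs)})\subseteq Z(\Gs)$ is immediate, since by definition every $u\in J$ satisfies $S_{N(u)}(T)=0$ for all $T\in\tilde{\frak N}(J)$, whence $J\subseteq I_{\tilde{\frak N}(\Gs)}$ and $Z(I_{\tilde{\frak N}(\Gs)})\subseteq Z(J)=Z(\Gs)$. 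For the reverse inclusion I would use the unitaries: if $x\in Z(\Gs)$ then $\gl_x\in\frak N(\Gs)\subseteq\tilde{\frak N}(\Gs)$, so any $u\in I_{\tilde{\frak N}(\Gs)}$ satisfies $0=S_{N(u)}(\gl_x)=u\cdot\gl_x=u(x)\gl_x$, forcing $u(x)=0$ and hence $x\in Z(I_{\tilde{\frak N}(\Gs)})$. This gives $\suppG(\tilde{\frak N}(\Gs))=Z(\Gs)$ and therefore $\suppo(\tilde{\frak N}(\Gs))\cong Z(\Gs)^*$. (Equivalently, one reads this straight off $\tilde{\frak N}(\Gs)=(\Sat J)^\bot$ using $\suppo((\Sat J)^\bot)=\nul(\Sat J)=Z(J)^*$ from the proof of Theorem \ref{312}.)

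For the second assertion, put $\Go=Z(\Gs)^*=Z(J)^*$, which is $\go$-closed because $G$ is second countable and hence closed sets are $\go$-closed. I would sandwich $\tilde{\frak N}(\Gs)$ between the two extreme masa-bimodules supported by $\Go$: on one side the first assertion and the definition of the $\go$-support give $\tilde{\frak N}(\Gs)\subseteq\frak M_{\max}(\Go)$; on the other side, $\tilde{\frak N}(\Gs)=(\Sat J)^\bot$ with preannihilator $\Sat J$ consisting of functions vanishing marginally a.e.\ on $\Go$ (since $\nul(\Sat J)=\Go$), so $\tilde{\frak N}(\Gs)$ contains the smallest weak* closed masa-bimodule with $\go$-support $\Go$.

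The main obstacle, and the only point where the hypothesis is used, is to collapse this sandwich. For this I would invoke the transfer between the two syntheses: a closed $E\subseteq G$ is a set of spectral synthesis for $A(G)$ precisely when $E^*=\{(s,t):ts^{-1}\in E\}$ is a set of operator synthesis (Ludwig--Turowska; see \cite{st1} and \cite{stt_clos}). Since $E=Z(\Gs)$ is synthetic by hypothesis, $\Go=E^*$ is operator synthetic, so the smallest and largest weak* closed masa-bimodules with $\go$-support $\Go$ coincide with $\frak M_{\max}(\Go)$; the sandwich collapses and $\tilde{\frak N}(\Gs)=\frak M_{\max}(Z(\Gs)^*)$. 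The delicate ingredient is thus entirely external---the equivalence of spectral and operator synthesis---while the remainder is a direct application of the annihilator identities already established.
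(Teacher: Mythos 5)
Your proof is correct and is essentially the paper's own argument: both hinge on the formula $\suppo(\Bim(J^\perp))\cong Z(J)^*$ from \cite[Section 5]{akt} (your route through Theorem \ref{312} invokes exactly this fact, and you yourself note the equivalent direct reading), and both settle the synthetic case via the Ludwig--Turowska transfer between spectral synthesis of $Z(\Gs)$ and operator synthesis of $Z(\Gs)^*$, which the paper phrases as uniqueness of the weak* closed $\cl D$-bimodule with $\go$-support $Z(\Gs)^*$ and you phrase as the collapse of your two-sided sandwich. The only flaw is bibliographic: that transfer theorem is \cite[Theorem 4.11]{lt}, not \cite{st1} or \cite{stt_clos}.
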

\begin{proof} From Theorem \ref{thbimn}, we know that
$\tilde{\frak{N}}(\Gs)=\Bim((\Gs A)^\bot)=\tilde{\frak{N}}(\Gs A)$
and so $\suppo \tilde{\frak{N}}(\Gs) \cong Z(\Gs A)^*$
by \cite[Section 5]{akt}.
But $Z(\Gs A)=Z(\Gs)$ as can easily be verified (if $\gs(t)\ne 0$ 
there exists $u\in A(G)$ so that $(\gs u)(t)\ne 0$; the converse is trivial).

The last claim follows from 
the fact that, when $Z(\Gs)$ satisfies 
spectral synthesis, there is a unique weak* closed $\cl D$ bimodule whose 
$\go$-support is  $Z(\Gs)^*$ (see \cite[Theorem 4.11]{lt} or the proof of
\cite[Theorem 5.5]{akt}).  
 \end{proof} 

\smallskip

Note that when $\Gs\subseteq P^1(G)$, the set $Z(\Gs)$ satisfies 
spectral synthesis.
\medskip

The following corollary is a direct consequence of Corollary \ref{c_nss}.

\begin{corollary}\label{corsyn}
Let $\Gs\subseteq M^{\cb}A(G)$ and $\Gs'=\{\mathbf 1-\gs:\gs\in\Gs\}$. 
 If $Z(\Gs') $ 
is a set of spectral synthesis, then 
$\widetilde{\cl H}_{\Gs} = \frak{M}_{\max}(Z(\Gs')^*)$.
\end{corollary}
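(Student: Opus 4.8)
The plan is to reduce this statement to Corollary \ref{c_nss}, which has already done all the substantive work. Recall that by definition $\widetilde{\cl H}_{\Gs} = \tilde{\frak N}(\Gs')$, where $\Gs' = \{\mathbf 1 - \gs : \gs \in \Gs\}$ (this is exactly the relation recorded after Definition \ref{d_jh}, writing $\Gs'$ with $\mathbf 1 - \gs$ rather than $\gs - \mathbf 1$; the two differ only by an overall sign and hence define the same operator $S_{N(\cdot)}$ up to sign, so the common null space and the zero set are unchanged). Thus I would first rewrite the object of interest as $\widetilde{\cl H}_{\Gs} = \tilde{\frak N}(\Gs')$, placing us squarely in the setting of Corollary \ref{c_nss} applied to the family $\Gs'$ in place of $\Gs$.

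Next I would invoke Corollary \ref{c_nss} directly. That corollary asserts that if $Z(\Gs')$ satisfies spectral synthesis, then $\tilde{\frak N}(\Gs') = \frak M_{\max}(Z(\Gs')^*)$. Since the hypothesis of the present corollary is precisely that $Z(\Gs')$ is a set of spectral synthesis, the conclusion is immediate: $\widetilde{\cl H}_{\Gs} = \tilde{\frak N}(\Gs') = \frak M_{\max}(Z(\Gs')^*)$. No further computation is required; the whole content is the identification of $\widetilde{\cl H}_{\Gs}$ with $\tilde{\frak N}(\Gs')$ and the verbatim application of the previous corollary.

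Since this is billed as a direct consequence, I expect no genuine obstacle. The only point needing a moment's care is the bookkeeping of $\Gs'$: one must confirm that the $\Gs'$ appearing here ($\{\mathbf 1 - \gs\}$) is the same set, for the purposes of $Z$ and of $\tilde{\frak N}$, as the $\Gs'$ defined just before the harmonic-operator discussion. Because $N$ is linear and $S_{N(\mathbf 1 - \gs)} = -S_{N(\gs - \mathbf 1)}$, the conditions $S_{N(\mathbf 1 - \gs)}(T) = 0$ and $S_{N(\gs - \mathbf 1)}(T) = 0$ coincide, and likewise $Z(\{\mathbf 1 - \gs\}) = Z(\{\gs - \mathbf 1\})$, so the ambiguity is harmless. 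A one-line proof citing Corollary \ref{c_nss} after recalling $\widetilde{\cl H}_{\Gs} = \tilde{\frak N}(\Gs')$ suffices.
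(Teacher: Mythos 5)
Your proposal is correct and is precisely the paper's argument: the paper offers no separate proof, stating only that the corollary ``is a direct consequence of Corollary \ref{c_nss}'', which is exactly your application of that corollary to the family $\Gs'$ together with the identity $\widetilde{\cl H}_{\Gs} = \tilde{\frak N}(\Gs')$ recorded after Definition \ref{d_jh}. Your side remark on the sign discrepancy (the paper writes $\gs - \mathbf 1$ there but $\mathbf 1 - \gs$ here, which changes neither the null space, since $S_{N(\mathbf 1 - \gs)} = -S_{N(\gs - \mathbf 1)}$, nor the zero set) is the only point needing care, and you handled it correctly.
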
	

\begin{corollary} Let  $\Omega$ be a subset of $G\times G$ which 
is invariant under all maps $(s,t)\to (sr,tr), \, r\in G$.
 Then $\Omega$ is marginally equivalent to an $\omega$-closed set if and only if 
it is marginally equivalent to a topologically closed set. 
\end{corollary}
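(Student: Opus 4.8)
The plan is to prove the two implications separately; only the passage from $\omega$-closed to topologically closed uses the invariance of $\Omega$, and it is there that the results of this section enter. The implication ``topologically closed $\Rightarrow$ $\omega$-closed'' holds for every set and needs no invariance: if $C\subseteq G\times G$ is closed, its complement is open, and since $G$ is second countable, $G\times G$ admits a countable base of open rectangles; hence the complement of $C$ is a countable union of (Borel) rectangles, so it is $\omega$-open and $C$ is $\omega$-closed. Thus if $\Omega\cong C$ with $C$ closed, then $\Omega$ is marginally equivalent to the $\omega$-closed set $C$.

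For the converse, assume $\Omega$ is invariant and $\Omega\cong\Omega_0$ with $\Omega_0$ $\omega$-closed. I would work with the maximal bimodule $\frak M_{\max}(\Omega)$. As $\frak M_{\max}$ depends only on the marginal equivalence class, $\frak M_{\max}(\Omega)=\frak M_{\max}(\Omega_0)$; since $\Omega_0$ is $\omega$-closed, $\suppo(\frak M_{\max}(\Omega))\cong\Omega_0\cong\Omega$ (see \cite{eks}). By construction $\frak M_{\max}(\Omega)$ is a weak* closed $\cl D$-bimodule, and I claim it is also invariant. Writing $\tau_r(s,t)=(sr,tr)$, a direct computation gives $\rho_r^*M_{\chi_A}\rho_r=M_{\chi_{Ar}}$, whence for any Borel rectangle $A\times B$,
\[
M_{\chi_B}\,\rho_rT\rho_r^*\,M_{\chi_A}=\rho_r\,M_{\chi_{Br}}\,T\,M_{\chi_{Ar}}\,\rho_r^*.
\]
If $A\times B$ is disjoint from $\Omega$, then $(Ar)\times(Br)=\tau_r(A\times B)$ is disjoint from $\tau_r(\Omega)=\Omega$, so the right-hand side vanishes for every $T\in\frak M_{\max}(\Omega)$. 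Hence $\rho_rT\rho_r^*\in\frak M_{\max}(\Omega)$, and $\frak M_{\max}(\Omega)$ is a weak* closed jointly invariant subspace.

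It then remains to apply Theorem \ref{312} to $\cl U=\frak M_{\max}(\Omega)$: its $\omega$-support is marginally equivalent to $(\suppG\frak M_{\max}(\Omega))^*$. The set $E_0:=\suppG\frak M_{\max}(\Omega)=Z(I_{\frak M_{\max}(\Omega)})$ is the zero set of an ideal of $A(G)$, hence a topologically closed subset of $G$; consequently $E_0^*$, being the preimage of $E_0$ under the continuous map $(s,t)\mapsto ts^{-1}$, is topologically closed. Combining these facts, $\Omega\cong\suppo(\frak M_{\max}(\Omega))\cong E_0^*$, which exhibits $\Omega$ as marginally equivalent to a closed set and completes the proof.

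I expect the main obstacle to be the joint invariance of $\frak M_{\max}(\Omega)$, that is, checking cleanly that conjugation by $\rho_r$ permutes the defining rectangle conditions exactly according to the right translations $(s,t)\mapsto(sr,tr)$; once the identity $\rho_r^*M_{\chi_A}\rho_r=M_{\chi_{Ar}}$ is in hand this matches precisely the invariance hypothesis on $\Omega$. The remaining ingredients---second countability for the easy implication, the identification $\suppo(\frak M_{\max}(\Omega))\cong\Omega$ for $\omega$-closed $\Omega$, and the closedness of $E_0^*$---are routine.
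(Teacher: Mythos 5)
Your proof is correct and follows essentially the same route as the paper's: pass to $\cl U=\frak M_{\max}(\Omega)$, use the \cite{eks} correspondence to get $\Omega\cong\suppo(\cl U)$, verify that $\cl U$ is a weak* closed jointly invariant subspace, and conclude via Theorem \ref{312}. The only difference is that you spell out details the paper leaves implicit (the identity $\rho_r^*M_{\chi_A}\rho_r=M_{\chi_{Ar}}$ behind the invariance of $\cl U$, and the topological closedness of $E_0^*$), which is fine.
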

\proof A topologically closed set is of course $\omega$-closed. For the converse, let
$\cl U=\frak{M}_{\max}(\Omega)$, so that
$\Omega\cong\suppo(\cl U)$. Note that $\cl U$ is a 
weak* closed jointly invariant space. 
Indeed, since $\Omega$ is invariant, for every $T\in\cl U$ 
the operator $T_r=: \rho_rT\rho_r^*$ is supported in $\Omega$ 
and hence is in $\cl U$. Of course $\cl U$ is invariant under all Schur multipliers. 
By  Theorem \ref{312},  
$\suppo(\cl U)$ is marginally equivalent to a closed set. \qed

\begin{theorem}\label{th_compsa}
Let $\cl A\subseteq \cl B(L^2(G))$. Then $\suppG(\cl A)$ is 
the smallest closed subset $E\subseteq G$ such that $E^*$ marginally contains 
$\suppo(\cl A)$.
\end{theorem}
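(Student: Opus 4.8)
The plan is to reduce the statement about an arbitrary family $\cl A$ to the already-established Theorem \ref{312}, which handles the case of a weak* closed jointly invariant subspace. First I would pass from $\cl A$ to the smallest weak* closed jointly invariant subspace $\cl{U(A)}$ containing it. By Proposition \ref{propsame}, the $G$-support is unchanged, $\suppG(\cl A) = \suppG(\cl{U(A)})$. I would also want to check that the $\go$-supports agree up to marginal equivalence, i.e.\ $\suppo(\cl A) \cong \suppo(\cl{U(A)})$; this should follow because $\cl A$ and $\cl{U(A)}$ support the same (smallest) $\go$-closed set — the operators $S_w(\rho_r T\rho_r^*)$ generating $\cl{U(A)}$ are supported by any $\go$-closed set supporting $\cl A$, since Schur multipliers and the maps $\mathrm{Ad}_{\rho_r}$ preserve supports (the latter because $\Omega$-support behaves well under the right-regular action). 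Having made this reduction, Theorem \ref{312} gives $\suppo(\cl{U(A)}) \cong (\suppG(\cl{U(A)}))^* = (\suppG(\cl A))^*$.

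Next I would interpret what it means for a closed set $E \subseteq G$ to satisfy ``$E^*$ marginally contains $\suppo(\cl A)$''. Writing $F = \suppG(\cl A)$, the computation above yields $\suppo(\cl A) \cong F^*$, so certainly $E = F$ is one closed set with $E^*$ marginally containing $\suppo(\cl A)$. The content of the theorem is the \emph{minimality}: I must show that if $E$ is any closed subset of $G$ with $\suppo(\cl A) \subseteq E^*$ marginally, then $F \subseteq E$. Equivalently, since $\suppo(\cl A)\cong F^*$, I need: whenever $E$ is closed and $F^* \subseteq E^*$ marginally, then $F\subseteq E$.

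The heart of the argument is therefore a lemma relating marginal containment of the ``starred'' sets $F^*, E^*$ (subsets of $G\times G$ invariant under $(s,t)\mapsto(sr,tr)$) back to ordinary containment of $F, E$ in $G$. The map $E\mapsto E^*$ is induced by the continuous surjection $(s,t)\mapsto ts^{-1}$, and the key point is that a marginally null subset of $G\times G$ of the special form $F^*\setminus E^*$ must actually be genuinely empty (not merely null), because $F^*\setminus E^* = (F\setminus E)^*$ is itself a ``starred'' set: a set of the form $D^*$ is marginally null if and only if $D=\emptyset$. To see this, note $D^*$ contains, for each $x\in D$, the full ``diagonal-translate'' $\{(s,t): ts^{-1}=x\}$, which projects onto all of $G$ in each coordinate and cannot be covered by $(X\times G)\cup(G\times X)$ for a Haar-null $X$ unless it is empty. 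I expect this marginal-to-genuine promotion to be the main obstacle, since it requires care with Haar measure and the fibre structure of the map $(s,t)\mapsto ts^{-1}$; once it is in place, $F^*\setminus E^*$ marginally null forces $F\setminus E = \emptyset$, i.e.\ $F\subseteq E$, completing the minimality and hence the proof.
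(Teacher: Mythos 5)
Your first half survives, but the minimality half of your proof rests on a false claim. The asserted marginal equivalence $\suppo(\cl A)\cong\suppo(\cl U(\cl A))$ does \emph{not} hold in general, because $\mathrm{Ad}_{\rho_r}$ does not preserve $\go$-supports: if $T$ is supported in $\Omega$, then $\rho_r T\rho_r^*$ is supported in a right translate $\{(sr',tr'):(s,t)\in\Omega\}$ of $\Omega$, not in $\Omega$ itself, unless $\Omega$ happens to be invariant. The paper's own example (following Remark \ref{last}) is a direct counterexample to your claim: for $G=\bb Z$ and $\cl A=\frak M_{\max}\{(i,j):i+j\in\{0,1\}\}$, the set $\suppo(\cl A)$ is the two-line set, whereas $\suppo(\cl U(\cl A))\cong(\suppG(\cl A))^*=\bb Z\times\bb Z$ (and on $\bb Z$ marginal equivalence is genuine equality, Haar measure being counting measure). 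Indeed, Remark \ref{last} makes exactly the point that $\suppo$ remembers the masa-bimodule generated by $\cl A$ while $\suppG$ only remembers the jointly invariant subspace; this discrepancy is what makes the theorem nontrivial. The existence half of your argument is unaffected, since there you only need the marginal containment $\suppo(\cl A)\subseteq\suppo(\cl U(\cl A))$, which follows from $\cl A\subseteq\cl U(\cl A)$. But in the minimality half you replace the hypothesis ``$E^*$ marginally contains $\suppo(\cl A)$'' by ``$F^*\subseteq E^*$ marginally'' (where $F=\suppG(\cl A)$), and that replacement is exactly what the false equivalence was supposed to license: the hypothesis only controls the possibly much smaller set $\suppo(\cl A)$.

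The repair is the paper's route: exploit the invariance of $E^*$ rather than any invariance of $\suppo(\cl A)$. If $E^*$ marginally contains $\suppo(\cl A)$, then every $T\in\cl A$ lies in $\frak M_{\max}(E^*)$; since $E^*$ is invariant under $(s,t)\mapsto(sr,tr)$, each $\rho_r T\rho_r^*$ is again supported in $E^*$, and $\frak M_{\max}(E^*)$ is a weak* closed masa-bimodule, hence also invariant under all $S_w$. So $\frak M_{\max}(E^*)$ is jointly invariant and contains $\cl A$, therefore contains $\cl U(\cl A)$. This gives $F^*\cong\suppo(\cl U(\cl A))\subseteq E^*$ marginally, and only at this point does your concluding lemma apply. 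That lemma itself --- that $(F\setminus E)^*$ marginally null forces $F\setminus E=\emptyset$, via the diagonal $\{(r,xr):r\in G\}$ and the fact that $G$ is not a union of two null sets --- is correct and coincides with the paper's final step. So you have the right endgame, but the missing idea in the middle is to push $\cl U(\cl A)$ inside $\frak M_{\max}(E^*)$ using the invariance of $E^*$, not to identify the two $\go$-supports.
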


\proof 
{Let $\cl U=\cl{U(A})$ be the smallest 
jointly invariant weak* closed subspace containing $\cl A$. 
Let $Z=Z(I_\cl{A})$; by definition, $Z=\suppG\cl A$. But 
$\suppG\cl A=\suppG\cl U=Z$ (Proposition \ref{propsame}) 
and so $\suppo\cl{U}\cong Z^*$ by Theorem \ref{312}.

Thus $Z^*$ does marginally contain $\suppo(\cl A)$.

On the other hand, let 
$E\subseteq G$ be a closed set such that $E^*$ marginally contains 
$\suppo(\cl A)$. Thus any operator $T\in\cl A$ is supported in
 $E^*$. But since   $E^*$  is invariant, $\rho_rT\rho_r^*$ is also supported in $E^*,$
for every $r\in G$. Thus $\cl U$ is supported in $E^*$.

This means  that $Z^*$ is marginally contained in $E^*$;
that is, there is a null set $N\subseteq G$ 
such that $Z^*\setminus E^*\subseteq (N\times G)\cup (G\times N)$. 
We claim that $Z\subseteq E$. To see this, assume, by way of contradiction, 
that there exists $s\in Z\setminus E$.
Then the `diagonal' $\{(r,sr):r\in G\}$ is a subset of 
$Z^*\setminus E^*\subseteq (N\times G)\cup (G\times N)$. 
It follows that for every $r\in G$, either $r\in N$ or $sr\in N$, which means that 
$r\in s\an N$. Hence $G\subseteq N\cup s\an N$, which is a null set. 
This contradiction shows that $Z\subseteq E$. 

\qed

\medskip

We note that for subsets $\cl S$ of $\vn(G)$ the  relation 
$\suppo (\cl{S})\subseteq (\suppG(\cl S))^*$ 
is  in \cite[Lemma 4.1]{lt}. 

\medskip

In \cite{neurun} the authors define, for a closed subset $Z$ of $G$,
the set 
\[
\cl B_Z(L^2(G)) = \{T\in\cl B(L^2(G): \suppG(T)\subseteq Z\}. 
\] 

\begin{corollary}\label{rem38}
If  $Z\subseteq G$ is closed, the  set $\cl B_Z(L^2(G))$ consists of all $T\in\cl B(L^2(G))$
 which are $\go$-supported in $Z^*$; that is,  $\cl B_Z(L^2(G))=\frak{M}_{\max}(Z^*)$. 
In particular, this space is a reflexive jointly invariant subspace. 
\end{corollary}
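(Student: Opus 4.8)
The plan is to deduce the set equality $\cl B_Z(L^2(G)) = \frak{M}_{\max}(Z^*)$ directly from Theorem \ref{th_compsa}, applied to the singleton family $\cl A = \{T\}$, and then to read off reflexivity and joint invariance from the structure of the set $Z^*$. The whole statement is essentially a specialisation of Theorem \ref{th_compsa} to single operators, so I expect no serious obstacle; the only point requiring care is to invoke the correct direction of that theorem for each of the two inclusions.

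First I would record the reformulation that, for an $\omega$-closed set $\Omega$, an operator $T$ lies in $\frak{M}_{\max}(\Omega)$ if and only if $\suppo(T) \subseteq \Omega$ marginally. This is immediate from the minimality in the definition of $\suppo(T)$ together with the monotonicity of $\frak{M}_{\max}$ under marginal inclusion. Since $Z$ is topologically closed, the set $Z^* = \{(s,t) : ts^{-1} \in Z\}$ is the preimage of $Z$ under the continuous map $(s,t)\mapsto ts^{-1}$, hence topologically closed and in particular $\omega$-closed, so this reformulation applies with $\Omega = Z^*$.

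For the inclusion $\frak{M}_{\max}(Z^*) \subseteq \cl B_Z(L^2(G))$ I would take $T$ with $\suppo(T) \subseteq Z^*$ marginally. By Theorem \ref{th_compsa} applied to $\cl A = \{T\}$, $\suppG(T)$ is the smallest closed set $E$ for which $E^*$ marginally contains $\suppo(T)$; as $Z$ is closed and $Z^*$ marginally contains $\suppo(T)$, minimality forces $\suppG(T) \subseteq Z$, i.e. $T \in \cl B_Z(L^2(G))$. Conversely, for $T \in \cl B_Z(L^2(G))$, Theorem \ref{th_compsa} gives $\suppo(T) \subseteq (\suppG(T))^*$ marginally; since $\suppG(T) \subseteq Z$ implies $(\suppG(T))^* \subseteq Z^*$, I obtain $\suppo(T) \subseteq Z^*$ marginally, i.e. $T \in \frak{M}_{\max}(Z^*)$. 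This yields the asserted equality.

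Finally, for the assertion that this space is a reflexive jointly invariant subspace, I would observe that $Z^*$ is invariant under all maps $(s,t)\mapsto(sr,tr)$, since $(tr)(sr)^{-1}=ts^{-1}$; consequently $\frak{M}_{\max}(Z^*)$ is invariant under conjugation by each $\rho_r$, and being of the form $\frak{M}_{\max}(\cdot)$ it is automatically a weak* closed $\cl D$-bimodule, hence jointly invariant. Reflexivity then follows from the fact, established in \cite{eks}, that $\frak{M}_{\max}(\Omega)$ is a reflexive masa-bimodule whenever $\Omega$ is $\omega$-closed, applied to the $\omega$-closed set $Z^*$.
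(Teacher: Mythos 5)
Your proof is correct and follows essentially the same route as the paper: the forward inclusion uses the minimality assertion of Theorem \ref{th_compsa} exactly as the paper does, and your converse, which invokes the containment half of Theorem \ref{th_compsa} (that $\suppo(T)$ is marginally contained in $(\suppG(T))^*$), is just the packaged form of the paper's appeal to Proposition \ref{propsame} and Theorem \ref{312}. Your explicit verification of the reflexivity and joint invariance claims (via the invariance of $Z^*$ and the result of \cite{eks} for $\go$-closed sets) spells out what the paper leaves implicit, but introduces no new idea.
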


\proof If $T$ is $\go$-supported in $Z^*$, then by Theorem \ref{th_compsa}, 
$\suppG(T)\subseteq Z$. 

Conversely if $\suppG(T)\subseteq Z$ then
$\suppG(\cl U(T))\subseteq Z$ by Proposition \ref{propsame}. 
But, by Theorem \ref{312},
$\suppo(\cl U(T)) \cong (\suppG(\cl U(T)))^*\subseteq Z^*$ 
 and so $T$ is $\go$-supported in $Z^*$. \qed

\begin{remark}  \label{last}
The $\omega$-support 
$\suppo(\cl A)$ of a set $\cl A$ of operators is more \lq sensitive' 
than $\suppG(\cl A)$ in that 
it encodes more information about $\cl A$. Indeed, $\suppG(\cl A)$ 
only depends  on the 
(weak* closed) jointly invariant subspace generated by $\cl A$, while 
$\suppo(\cl A)$ depends on the (weak* closed) masa-bimodule
 generated by $\cl A$.
\end{remark}

{\example 
Let $G=\bb Z$ and $\cl A=\frak{M}_{\max}\{ (i,j):i+j \in\{0,1\}\}$. 
The $\go$-support of $\cl A$ is of course the two-line set $\{ (i,j):i+j \in\{0,1\}\}$,
while its $G$-support is $\bb Z$  
which gives no information about $\cl A$.}

Indeed, if $E\subseteq\bb Z$ contains $\suppG(\cl A)$, then by Theorem 
\ref{th_compsa} $E^*=\{(n,m)\in\bb Z\times\bb Z:m-n\in E\}$ must contain 
$\{ (i,j):i+j \in\{0,1\}\}$. Thus for all $n\in\bb Z$, since $(-n,n)$ and  $(-n,n+1)$ are
in $\suppo(\cl A)$ we have $n-(-n)\in E$ and  $n+1-(-n)\in E$; hence 
$\bb Z\subseteq E$.

\def\cprime{$'$} \def\cprime{$'$}


\begin{thebibliography}{1}


\bibitem{akt}
 \textsc{M. {Anoussis}, ~A. {Katavolos} and I.~G. {Todorov}},
\newblock {\it Ideals of $A(G)$ and bimodules over 
maximal abelian selfadjoint  subalgebras,}
\newblock {\rm J. Funct. Anal. 266 (2014), 6473-6500, http://dx.doi.org/10.1016/j.jfa.2014.03.018.}


\bibitem{arv}
\textsc{W.B. Arveson}, {\it Operator algebras and invariant subspaces},
{\rm Ann. Math. (2) 100 (1974), 433-532.}


\bibitem{bf} \textsc{M. Bo\.{z}ejko and G. Fendler},
{\it Herz-Schur multipliers and completely bounded multipliers of
the Fourier algebra of a locally compact group}, 
\textrm{Boll. Un. Mat. Ital. A (6) 2 (1984), no. 2, 297-302}.

\bibitem{chulau}
 \textsc{C-H. {Chu} and A. T-M.  {Lau}},
\newblock {\em {Harmonic functions on groups and Fourier algebras}},
\newblock Berlin, Springer, 2002.

\bibitem{eymard}
 \textsc{P. Eymard},
\newblock {\em L'alg{\`e}bre de Fourier d'un groupe localement compact},
\newblock {\rm Bull. Soc. Math. Fr.} 92 (1964), 181-236.

\bibitem{eks}
 \textsc{J.A. Erdos, A. Katavolos and V.S. Shulman},
\newblock 
{\it Rank one subspaces of bimodules over maximal abelian selfadjoint algebras},
\newblock {\rm J. Funct. Anal.} 157 (1998), no.2, 554-587.

\bibitem{haa}
\textsc{U. Haagerup}, \textit{Decomposition of completely bounded
maps on operator algebras}, \textrm{unpublished manuscript}.

\bibitem{hk}
\textsc{U. Haagerup and J. Kraus}, 
\textit{Approximation properties for group C*-algebras and group von Neumann algebras}, 
\textrm{Trans. Amer. Math. Soc. 344 (1994), no. 2, 667-699}.

\bibitem{hr2}
 \textsc{E. Hewitt and K. A. Ross},
\newblock {\em Abstract harmonic analysis. {V}ol. {II}: {S}tructure and
  analysis for compact groups. {A}nalysis on locally compact {A}belian groups},
\newblock Die Grundlehren der mathematischen Wissenschaften, Band 152.
  Springer-Verlag, New York, 1970.

\bibitem{j} \textsc{P. Jolissaint},
{\it A characterisation of completely bounded multipliers of Fourier algebras}, 
{\rm Colloquium Math. 63 (1992), 311-313}.

\bibitem{kalantar}
\textsc{M. Kalantar, M. Neufang and Z.-J. Ruan},
\newblock {\em Poisson boundaries over locally compact quantum groups},
\newblock {\rm {Int. J. Math.} 24 (2013), no. 3, DOI: 10.1142/S0129167X13500237.}

\bibitem{kaniuth}
{\sc E. Kaniuth},
\newblock {\em {A course in commutative Banach algebras.}}, 
volume 246 of {\em  Graduate Texts in Mathematics}.
\newblock Springer-Verlag, New York, 2009.

\bibitem{kl} {\sc E. Kaniuth and A.T. Lau},
{\it Spectral synthesis for $A(G)$ and subspaces of $\vn(G)$},
{\rm Proc. Amer. Math. Soc. 129 (2001), no. 11, 3253-3263}.

\bibitem{kp}
\textsc{A. Katavolos and V. Paulsen}, \textit{On the ranges of
bimodule projections}, \textrm{Canad. Math. Bull. 48 (2005) no. 1,
97-111}.

\bibitem{lt}
{\sc J. Ludwig and L. Turowska}, {\it On the connection between sets
of operator synthesis and sets of spectral synthesis for locally
compact groups},
{\rm J. Funct. Anal. 233 (2006), 206-227}.

\bibitem{neurun}
\textsc{M. Neufang and  V. Runde},
\newblock {\em Harmonic operators: the dual perspective},
\newblock {\rm {Math. Z.}} 255 (2007), no. 3, 669-690.

\bibitem{neuruaspro}
\textsc{M. Neufang,  Z-J.  Ruan and N. Spronk},
\newblock {\em Completely isometric representations of 
$M_{cb}A(G)$ and  $UCB(\widehat G)^*$},
\newblock {\rm {Trans. Am. Math. Soc.}} 360 (2008), no. 3, 1133-1161.

\bibitem{pe} 
{\sc V.V. Peller,}
{\em  Hankel operators in the theory of
perturbations of unitary and selfadjoint operators. (Russian)}
{\rm
Funktional. Anal. i Prilozhen.}, 19 (2): 37-51, 1985. English
translation: {\rm Functional Anal. Appl.} 19 (2): 111-123, 1985.

\bibitem{stt_clos}
{\sc V. S. Shulman, I. G. Todorov and L. Turowska}, 
{\it Closable multipliers},  
{\rm  Integral Equations Operator Theory  69  (2011),  no. 1, 29-62}.

\bibitem{st1}
{\sc V. S. Shulman and L. Turowska}, 
{\it Operator synthesis I: synthetic sets, bilattices and tensor algebras},
{\rm J. Funct. Anal. 209 (2004), 293-331}.

\bibitem{sm}
{\sc R. R. Smith}, 
{\it Completely bounded module maps and the Haagerup tensor product}, 
{\rm J. Funct. Anal. 102 (1991), 156-175}.

\bibitem{spronk}
{\sc N. Spronk},  
{\it Measurable Schur multipliers and completely 
bounded multipliers of the Fourier algebras}, 
{\rm Proc. London Math. Soc (3) 89 (2004), 161-192.}

\bibitem{tatsuuma2}
\textsc{M. Takesaki and N. Tatsuuma},
\newblock {\em Duality and subgroups. II.}
\newblock {\rm J. Funct. Anal. 11 (1972), 184-190.}

\end{thebibliography}
\end{document}